\documentclass[11pt]{amsart}

\usepackage[T1]{fontenc}
\usepackage{lmodern}
\usepackage{microtype}
\usepackage{amsmath,amssymb,amsthm,mathtools}
\usepackage{enumitem}
\usepackage[colorlinks=true,linkcolor=blue,citecolor=blue,urlcolor=blue]{hyperref}
\hypersetup{
  pdftitle={Right q-vector calculus at integral superdimension: localized decompositions and resonance},
  pdfauthor={Juan Bory-Reyes, Baruch Schneider, Diana Schneiderova, Yifan Zhang},
  pdfkeywords={radial algebra, superspace, q-calculus, Fischer decomposition, resonance}
}
\allowdisplaybreaks

\newtheorem{theorem}{Theorem}[section]
\newtheorem{proposition}[theorem]{Proposition}
\newtheorem{lemma}[theorem]{Lemma}
\newtheorem{corollary}[theorem]{Corollary}
\newtheorem{definition}[theorem]{Definition}
\theoremstyle{remark}
\newtheorem{remark}[theorem]{Remark}
\newtheorem{example}[theorem]{Example}

\newcommand{\R}{\mathbb R}
\newcommand{\C}{\mathbb C}
\newcommand{\N}{\mathbb N}
\newcommand{\B}{\mathbb B}
\newcommand{\K}{\mathbb K}
\newcommand{\cR}{\mathcal R}
\newcommand{\cA}{\mathcal A}
\newcommand{\cI}{\mathcal I}
\newcommand{\cH}{\mathcal H}
\newcommand{\cG}{\mathcal G}
\newcommand{\cC}{\mathcal C}
\newcommand{\cK}{\mathcal K}
\newcommand{\cY}{\mathcal Y}
\newcommand{\cX}{\mathcal X}

\newcommand{\Ker}{\operatorname{Ker}}
\newcommand{\im}{\operatorname{im}}

\newcommand{\qnum}[1]{\left[#1\right]_q}
\newcommand{\qnumat}[2]{\left[#1\right]_{#2}}
\newcommand{\partialR}{\partial^{\mathrm R}_{x,q}}
\newcommand{\partialRY}{\partial^{Y,\mathrm R}_{x,q}}
\newcommand{\partialsc}{\partial^{\mathrm{sc},Y}_{x,q}}
\newcommand{\RB}{R_{\B}}

\title[Right $q$-vector calculus at integral superdimension]{Right $q$-vector calculus at integral superdimension:\ localized decompositions and resonance}

\author[J.~Bory-Reyes]{Juan Bory-Reyes}
\address{ESIME-Zacatenco, Instituto Polit\'ecnico Nacional, 07738 CDMX, M\'exico}
\email{juanboryreyes@yahoo.com}

\author[B.~Schneider]{Baruch Schneider}
\address{Department of Mathematics, University of Ostrava, 70200 Ostrava, Czechia}
\email{baruch.schneider@osu.cz}

\author[D.~Schneiderov\'a]{Diana Schneiderov\'a}
\address{Department of Mathematics, University of Ostrava, 70200 Ostrava, Czechia}
\email{diana.schneiderova@osu.cz}

\author[Y.~Zhang]{Yifan Zhang}
\address{Department of Mathematics, University of Ostrava, 70200 Ostrava, Czechia}
\address{Department of Algebra, Charles University, 18675 Prague, Czechia}
\address{Department of Applied Mathematics, VSB--Technical University of Ostrava, 70800 Ostrava, Czechia}
\email{yifan.zhang@osu.cz}

\subjclass[2020]{Primary 30G35; Secondary 33D05, 15A66, 58A50}
\keywords{radial algebra, superspace, integral superdimension, $q$-calculus, right $q$-vector derivative, Fischer decomposition, resonance, singular vectors}
\date{}

\begin{document}

\begin{abstract}
We specialize the intrinsic right $q$-vector derivative on radial algebras to integral superdimension.  The formal dimension is encoded by an independent coefficient $Q$, and formal radial superspace of superdimension $M=m-2n$ is obtained by the coefficient specialization $Q\mapsto q^M$.  This gives a rigorous universal calculus and, on finite blocks containing at most $m$ abstract vectors, a faithful coordinate realization on $\R^{m|2n}$.

The localized exterior result is formulated as a Green decomposition by complementary projector images.  Whenever the specialized finite determinant is nonzero, full left multiplication yields a determinant-localized right-monogenic Fischer decomposition.  Beyond this base-change theory, we determine the exceptional one-vector calculus completely: for $M=-2\ell$ there is one additional singular monomial and one missing image monomial, whereas all other integral superdimensions give a surjective derivative with constants as its kernel.  We then prove that the degree-zero Fischer operator is exactly diagonal on exterior blades, obtain its determinant explicitly, classify all support-resonance values in $0<q<1$, and give an exact kernel-rank formula as a sum of support multiplicities.  On a block with $N$ auxiliary vectors, an even support rank $p$ has pure multiplicity $\binom Np$ on the support truncation.  At an odd-support root, every lower odd factor is nonzero and any simultaneous lower resonance is unique, even, and characterized by one strictly monotone scalar equation.  These results distinguish persistent nonpositive-even-superdimension defects from isolated support-dependent $q$-resonances.  An appendix records that constant scalar projection of two independent orthogonal right $q$-vector derivatives does not descend to the Hermitian quotient.
\end{abstract}

\maketitle

\section{Introduction}

Clifford analysis refines harmonic analysis by replacing the scalar Laplacian with the Dirac operator; standard references include \cite{BrackxDelangheSommen1982,DelangheSommenSoucek1992,GilbertMurray1991}.  Sommen's radial algebra \cite{Sommen1997} retains the invariant algebraic rules of Clifford vector variables without fixing a coordinate realization, a quadratic form, or a dimension.  The formal dimension enters through the vector derivative and can therefore be treated as a parameter.  This viewpoint is useful for Fischer decompositions and for separating universal identities from representation-specific low-dimensional syzygies \cite{SabadiniStruppaSommenVanLancker2002,DeSchepperGuzmanSommen2017}.

The classical superspace Fischer theory already has a distinguished exceptional regime.  For superdimension $M=m-2n$ outside $-2\N_0$, the usual harmonic Fischer decomposition holds in its standard form, whereas at nonpositive even superdimension the decomposition must be modified and the natural $\mathfrak{osp}(m|2n)$-modules are in general indecomposable rather than irreducible \cite{LavickaSmid2015}.  The resonance theory below exhibits two different $q$-phenomena against this background: a persistent defect at nonpositive even superdimension, analogous to the classical exceptional regime, and isolated support-dependent resonances that occur only at special deformation parameters.

Several inequivalent $q$-deformations of Clifford analysis are available.  An axiomatic $q$-Dirac theory and associated operator identities were developed in \cite{CoulembierSommen2010,CoulembierSommen2011}.  Coordinatewise Jackson calculus replaces ordinary coordinate derivatives by one-variable $q$-differences \cite{ZimmermannBernsteinSchneider2025}, while the quantum-Euclidean construction uses $q$-commuting coordinates, symmetric difference operators, and a deformed Clifford algebra \cite{BernsteinZimmermannSchneider2025QuantumDirac}.  The present construction is different from both: it uses the intrinsic right $q$-vector derivative on radial algebras from \cite{BarseghyanBorySchneiderZhang2026}, acts directly on mixed radial invariants attached to one distinguished vector, and is compatible under adjoining further parameter vectors.

Our first purpose is to make the passage to superspace precise.  A literal replacement of the formal dimension by $M=m-2n$ is too informal, and an arbitrary coordinate representation need not be faithful.  We therefore distinguish two levels.  At the universal level, the formal dimension is encoded by an algebraically independent coefficient $Q$, and radial superspace of superdimension $M$ is defined by the base change $Q\mapsto q^M$.  At the coordinate level, a finite block involving $\ell$ abstract vectors embeds faithfully in the standard superspace algebra whenever the bosonic dimension satisfies $m\ge\ell$.  On such blocks the specialized right $q$-vector derivative is transported through the representation.

Our second purpose is to transfer the two localized decomposition mechanisms accurately.  Exterior creation does not give a decomposition by $\ker\partial^{\mathrm R}_{x,q}$ and the image of exterior creation.  Rather, its anticommutator with the derivative has a triangular inverse after central localization, and this inverse produces two complementary Green projectors.  Full left multiplication gives the stronger determinant-localized right-monogenic Fischer decomposition.  We include the direct-limit compatibility argument for the finite inverses and projections.

The main new results concern resonance at integral superdimension.  In the one-vector algebra, the right $q$-vector derivative is diagonal between consecutive monomial degrees.  We use this to determine its kernel, image, and cokernel for every $M\in\mathbb Z$ and every $0<q<1$.  The persistent defect occurs exactly for $M\in-2\N_0$: when $M=-2\ell$, the monomial $x^{2\ell+1}$ is singular and $x^{2\ell}$ is absent from the image.  For the degree-zero finite Fischer operator, an alternating blade identity cancels every apparent lower-support contraction and gives the exact eigenvalue
\[
        F_{M,p}(q)=\qnum{M-p}+(-1)^{p-1}p
\]
on each support-$p$ blade.  We classify all roots of this factor in $(0,1)$ and obtain the kernel as the direct sum of all resonant blade ranks.  This strengthens the associated-graded support factor from \cite{BarseghyanBorySchneiderZhang2026} to an exact action on the full degree-zero blade module.  At an even-support root, all lower factors are nonzero and the truncated singular module has free rank $\binom Np$, with generators $L_xy_J$.  At an odd-support root, all lower odd factors are nonzero and there is at most one simultaneous lower even rank, determined by a strictly decreasing scalar function.  Thus integral superdimension produces two different phenomena: persistent defects inherited from nonpositive even superdimension and isolated, support-dependent resonances created by the $q$-deformation.

Finally, an appendix records a boundary of the orthogonal construction.  Hermitian Clifford analysis on superspace is developed in \cite{DeSchepperGuzmanSommen2018Hermitian,DeSchepperGuzmanSommen2018SpinActions}.  In the present radial $q$-setting, projecting two independent orthogonal right $q$-vector derivatives onto the complex variables $z$ and $z^\dagger$ fails already on the quadratic complex-structure relations, and no nonzero constant scalar re-projection removes the obstruction.  This is used only as an application showing that the orthogonal specialization mechanism does not automatically descend to the Hermitian quotient.

The paper is organized as follows.  Section~\ref{sec:radial-q} fixes the coefficient framework and recalls the finite and direct-limit right $q$-vector derivatives.  Section~\ref{sec:superspace} defines formal radial superspace by base change and proves the faithful finite-block coordinate realization.  Section~\ref{sec:finite-super} records the specialized finite formulas.  Section~\ref{sec:decompositions} proves the localized Green and right-monogenic decompositions.  Section~\ref{sec:resonance} develops the exceptional one-vector and exact support-resonance theory and records the two-vector benchmark.  Appendix~\ref{sec:hermitian} establishes the scalar Hermitian projection no-go theorem.

\section{Radial algebras and the right \texorpdfstring{$q$}{q}-vector derivative}\label{sec:radial-q}

Let $S$ be a set of abstract vector variables.  The radial algebra $R(S)$ is the associative $\R$-algebra generated by $S$ subject to
\begin{equation}\label{eq:radial-relation}
        [\{u,v\},w]=0,
        \qquad u,v,w\in S.
\end{equation}
Its scalar subalgebra is generated by the central anticommutators.  Every element of a finitely generated radial algebra has a unique exterior expansion with scalar coefficients \cite{Sommen1997,DeSchepperGuzmanSommen2017}.

\subsection{Coefficient ring and specialization}

Let $q$ and $Q$ be algebraically independent and put
\begin{equation}\label{eq:coefficient-ring}
\begin{aligned}
 \mathbb A_q&:=\R[q,q^{-1},(1-q)^{-1}],
 &\qquad \B&:=\mathbb A_q[Q],\\
 \K&:=\operatorname{Frac}(\B)=\R(q,Q),
 &\qquad \RB(S)&:=\B\otimes_{\R}R(S).
\end{aligned}
\end{equation}
We denote the formal dimension by $d$.  For $a\in\mathbb Z$ set
\begin{equation}\label{eq:formal-dimension}
        q^{d+a}:=Qq^a,
        \qquad
        \qnum{d+a}:=\frac{1-Qq^a}{1-q},
\end{equation}
and for $r\in\mathbb Z$ write
\begin{equation}\label{eq:integer-q-number}
        \qnum r:=\frac{1-q^r}{1-q}.
\end{equation}
Thus $q^d$ is mnemonic notation for the independent coefficient $Q$.

For every integer $M$, define the specialization
\begin{equation}\label{eq:sigma-M}
        \sigma_M:\B\longrightarrow\mathbb A_q,
        \qquad
        \sigma_M(Q)=q^M.
\end{equation}
The use of $q^{-1}$ in $\mathbb A_q$ makes \eqref{eq:sigma-M} well defined also for negative $M$.  After specialization, $\qnum{d+a}$ becomes $\qnum{M+a}$.  Classical limits are understood by first applying $\sigma_M$ and then letting $q\to1$.

For a scalar variable $u$ and a parameter $P$, the Jackson difference and dilation are
\begin{equation}\label{eq:jackson}
        \delta_P^{(u)}f
        :=\frac{f(u)-f(Pu)}{(1-P)u},
        \qquad
        T_P^{(u)}f(u):=f(Pu).
\end{equation}
All localizations below are by multiplicative subsets of central coefficient rings.

\subsection{Finite relative and direct-limit derivatives}

Fix $x\in S$ and a finite auxiliary set
\[
        Y=\{y_1,\ldots,y_N\}\subset S\setminus\{x\}.
\]
Put
\begin{equation}\label{eq:relative-scalars}
        r=x^2,
        \qquad s_i=\{x,y_i\},
        \qquad c_{ij}=\{y_i,y_j\},
\end{equation}
and let
\[
        \cA_x(Y)=\B[r,s_1,\ldots,s_N,c_{ij}:1\le i\le j\le N].
\]
For $I=\{i_1<\cdots<i_p\}$ write $y_I=y_{i_1}\wedge\cdots\wedge y_{i_p}$.  The $x$-relative exterior decomposition is
\begin{equation}\label{eq:relative-decomposition}
\RB(\{x\}\cup Y)
 =\bigoplus_I y_I\cA_x(Y)
 \oplus\bigoplus_I(x\wedge y_I)\cA_x(Y).
\end{equation}

The scalar-input operator is
\begin{equation}\label{eq:scalar-operator}
        \partialsc
        =(1+q)x\delta_{q^2}^{(r)}
          +2T_{q^2}^{(r)}\sum_{i=1}^Ny_i\delta_q^{(s_i)}.
\end{equation}
The finite relative right $q$-vector derivative of \cite{BarseghyanBorySchneiderZhang2026} is the $\B$-linear map determined by
\begin{align}
\partialRY(y_IA)&=y_I\partialsc(A),\label{eq:finite-y}\\
\partialRY((x\wedge y_I)B)
&=\qnum{d-|I|}y_IB+q^{d-|I|}(x\wedge y_I)\partialsc(B),\label{eq:finite-x}
\end{align}
where scalar coefficients are written on the right.  For $Y=\varnothing$ these formulas give
\begin{equation}\label{eq:one-vector-formal}
        \partialR(x^{2a})=\qnum{2a}x^{2a-1},
        \qquad
        \partialR(x^{2a+1})=\qnum{d+2a}x^{2a}.
\end{equation}
The limit after $Q=q^M$, $q\to1$ is the classical right radial vector derivative.

\begin{theorem}[Direct-limit compatibility]\label{thm:direct-limit}
If $Y\subset Z\subset S\setminus\{x\}$ are finite, then
\[
 \partial^{Z,\mathrm R}_{x,q}\bigm|_{\RB(\{x\}\cup Y)}
 =\partial^{Y,\mathrm R}_{x,q}.
\]
Consequently the finite operators define a $\B$-linear direct-limit operator
\[
        \partialR:\RB(S)\longrightarrow\RB(S).
\]
\end{theorem}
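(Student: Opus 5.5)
The plan is to check the restriction identity directly on the $x$-relative exterior decomposition \eqref{eq:relative-decomposition}, using that the $Y$-decomposition of an element is already its $Z$-decomposition. The direct limit then follows by a routine cofinality argument.

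Fix an enumeration $Z=\{y_1,\ldots,y_{N'}\}$ whose first $N$ elements enumerate $Y$. A different enumeration only permutes the wedge factors in $y_I$, which changes $y_I$ by a sign; since both defining formulas \eqref{eq:finite-y}--\eqref{eq:finite-x} are linear in $y_I$, this is harmless. With this choice, each $y_I$ with $I\subset\{1,\ldots,N\}$ is literally the same element for $Y$ and for $Z$. Moreover $\cA_x(Y)\subset\cA_x(Z)$ is the polynomial subring in the variables $r,s_i,c_{ij}$ with $i,j\le N$.

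The first step is the uniqueness input. By the unique exterior expansion of finitely generated radial algebras \cite{Sommen1997,DeSchepperGuzmanSommen2017}, the scalars $r,s_i,c_{ij}$ are algebraically independent over $\B$. Hence $\cA_x(Z)$ is a genuine polynomial ring, and the sum \eqref{eq:relative-decomposition} for $Z$ is direct. Take $F\in\RB(\{x\}\cup Y)$ and write its $Y$-decomposition
\[
F=\sum_I y_IA_I+\sum_I(x\wedge y_I)B_I,\qquad A_I,B_I\in\cA_x(Y).
\]
Since $\cA_x(Y)\subset\cA_x(Z)$, this same expression is an admissible $Z$-decomposition. By directness it is the $Z$-decomposition of $F$, with all coefficients indexed by $I\subset\{1,\ldots,N\}$ and the remaining coefficients zero.

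The second step, which I expect to be the only point needing care, is to compare the scalar-input operators on $\cA_x(Y)$. For $A\in\cA_x(Y)$ the $Z$-version of \eqref{eq:scalar-operator} contains the extra terms
\[
2T_{q^2}^{(r)}\,y_j\,\delta_q^{(s_j)}A,\qquad N<j\le N'.
\]
Because $A$ is a polynomial not involving the independent variable $s_j$, the Jackson difference \eqref{eq:jackson} gives $\delta_q^{(s_j)}A=(A-A)/((1-q)s_j)=0$. The operators $(1+q)x\delta_{q^2}^{(r)}$, $T_{q^2}^{(r)}$ and $\delta_q^{(s_i)}$ for $i\le N$ act only on $r,s_i$ and preserve $\cA_x(Y)$. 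Hence $\partial^{\mathrm{sc},Z}_{x,q}A=\partial^{\mathrm{sc},Y}_{x,q}A$. The coefficients $\qnum{d-|I|}$ and $q^{d-|I|}$ in \eqref{eq:finite-x} depend only on $|I|$, so applying \eqref{eq:finite-y}--\eqref{eq:finite-x} termwise gives $\partial^{Z,\mathrm R}_{x,q}F=\partial^{Y,\mathrm R}_{x,q}F$.

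For the direct limit, note that $\RB(S)=\bigcup_Y\RB(\{x\}\cup Y)$ over finite $Y\subset S\setminus\{x\}$, and that this family is directed. For $F\in\RB(S)$ define $\partialR F:=\partial^{Y,\mathrm R}_{x,q}F$ for any finite $Y$ with $F\in\RB(\{x\}\cup Y)$. If $Y'$ is another such set, compare both with $Y\cup Y'$ using the restriction identity; this shows the definition is independent of the choice. For $\B$-linearity, evaluate a linear combination inside a common finite block, where the finite operator is $\B$-linear.
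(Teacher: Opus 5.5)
Your proposal is correct and follows essentially the paper's argument. The $Y$-exterior expansion is already the $Z$-expansion, the Jackson differences in the newly introduced mixed scalars $s_j$ vanish, and formulas \eqref{eq:finite-y}--\eqref{eq:finite-x} then agree termwise; you differ only in settling the ordering issue directly (the sign change of $y_I$ is harmless by linearity, and $\partialsc$ is label-symmetric) where the paper cites covariance under relabelling from \cite{BarseghyanBorySchneiderZhang2026}, and in spelling out the cofinality argument for the direct limit that the paper leaves implicit.
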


\begin{proof}
Every element involves finitely many vector variables.  If $Y$ is enlarged to $Z$, its exterior expansion contains no blade involving the new variables, and every Jackson difference with respect to a newly introduced mixed scalar vanishes.  Equations \eqref{eq:finite-y}--\eqref{eq:finite-x} therefore give the same value.  Covariance under relabelling, proved in \cite{BarseghyanBorySchneiderZhang2026}, removes the dependence on the ordering of the finite auxiliary set.
\end{proof}

\section{Formal and coordinate radial superspace}\label{sec:superspace}

Let $m,n\in\N_0$.  The standard Clifford algebra of superspace is generated by commuting bosonic coordinates, anticommuting fermionic coordinates, orthogonal Clifford generators, and symplectic Clifford generators; see \cite{DeBieSommen2007AnnPhys,DeBieSommen2007CorrectRules}.  A supervector has the form
\begin{equation}\label{eq:supervector}
        x=\sum_{j=1}^{m}x_je_j+
          \sum_{k=1}^{2n}\grave{x}_k\grave e_k.
\end{equation}
For the classical right super Dirac operator one has
\begin{equation}\label{eq:superdimension-evaluation}
        [x]\partial_x^{\mathrm R}=m-2n=:M.
\end{equation}

\subsection{Formal radial superspace by base change}

\begin{definition}[Formal radial superspace algebra]\label{def:formal-radial-super}
For $M\in\mathbb Z$, define first the regular coefficient form
\begin{equation}\label{eq:regular-formal-super-algebra}
 \cR_M^{\mathbb A}(S)
 :=\mathbb A_q\otimes_{\sigma_M,\B}\RB(S)
 \cong\mathbb A_q\otimes_{\R}R(S),
\end{equation}
and then its fraction-field extension
\begin{equation}\label{eq:formal-super-algebra}
 \cR_M(S):=\R(q)\otimes_{\mathbb A_q}\cR_M^{\mathbb A}(S).
\end{equation}
The specialized right $q$-vector derivative is defined on the regular form by
\begin{equation}\label{eq:specialized-derivative}
 1\otimes\partialR:\cR_M^{\mathbb A}(S)
 \longrightarrow\cR_M^{\mathbb A}(S),
\end{equation}
and is denoted by $\partial^{M,\mathrm R}_{x,q}$ also after extension to $\cR_M(S)$.  The same convention applies to every finite relative operator.
\end{definition}

This definition is universal: it records precisely those radial identities which depend on superspace only through the superdimension.  It does not quotient by coordinate syzygies.  The regular form in \eqref{eq:regular-formal-super-algebra} is retained because numerical evaluation of $q$ is not defined on all of $\R(q)$.

\begin{theorem}[Base-change superdimension principle]\label{thm:base-change}
Every $\B$-linear operator identity built from the finite or direct-limit right $q$-vector derivative, vector multiplication, exterior creation, and central scalar coefficients remains valid on $\cR_M(S)$ after applying $\sigma_M$.

Let $\Sigma$ be a central multiplicative set in a coefficient ring occurring in a finite block.  If $0\notin\sigma_M(\Sigma)$, then specialization commutes with localization and with every inverse defined over that localization.
\end{theorem}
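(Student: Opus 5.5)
The plan is to treat the statement as a formal consequence of base change along the ring homomorphism $\sigma_M:\B\to\mathbb A_q$, with no computation inside the radial algebra. The first step is to note that $\cR_M^{\mathbb A}(S)=\mathbb A_q\otimes_{\sigma_M,\B}\RB(S)$ and that the functor $\mathbb A_q\otimes_{\sigma_M,\B}(-)$ sends $\B$-linear maps to $\mathbb A_q$-linear maps. This functor preserves composition, sums, identities and scalar multiples, the latter transported through $\sigma_M$. Each operator in the list is $\B$-linear and therefore has a specialization $1\otimes T$:
\begin{itemize}[label={}]
\item the finite derivatives $\partialRY$ and the direct-limit derivative $\partialR$ from Theorem~\ref{thm:direct-limit};
\item left and right multiplication by vectors;
\item exterior creation $x\wedge(\cdot)$;
\item multiplication by central scalars of $\cA_x(Y)$ or $\B$.
\end{itemize}
An operator identity is an equation $P(T_1,\dots,T_k)=0$ in the $\B$-algebra $\operatorname{End}_\B(\RB(S))$, where $P$ is a noncommutative polynomial with $\B$-coefficients. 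Applying the tensor functor gives $\sigma_M(P)(1\otimes T_1,\dots,1\otimes T_k)=0$ on $\cR_M^{\mathbb A}(S)$. Extending scalars further by $\R(q)\otimes_{\mathbb A_q}(-)$ carries the identity to $\cR_M(S)$. For the direct-limit operator, I would check the identity blockwise on each $\RB(\{x\}\cup Y)$. Tensor product commutes with direct limits, and Theorem~\ref{thm:direct-limit} shows that the specialized finite operators are compatible, so their limit is the specialization of $\partialR$.

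For the second part, let $\Sigma\subset C$ be a central multiplicative set, where $C$ is a central coefficient ring of a finite block (for example $\cA_x(Y)$ or $\B$), viewed as a $\B$-algebra. Put $C_M:=\mathbb A_q\otimes_{\sigma_M,\B}C$ and let $\bar\Sigma$ denote the image of $\Sigma$ in $C_M$. The hypothesis $0\notin\sigma_M(\Sigma)$ guarantees that $\bar\Sigma$ is a multiplicative set not containing $0$, so $\bar\Sigma^{-1}C_M$ is a nonzero ring. I would then prove the canonical isomorphism
\[
\mathbb A_q\otimes_{\B}\Sigma^{-1}V\cong\bar\Sigma^{-1}\bigl(\mathbb A_q\otimes_\B V\bigr)
\]
for any $C$-module $V$, in particular for the finite block module. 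The argument is the universal property of localization: both sides are the initial $\mathbb A_q\otimes C$-modules receiving $V$ on which elements of $\Sigma$ act invertibly. Equivalently, both equal $\bar\Sigma^{-1}C_M\otimes_C V$ by associativity of tensor products.

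Now let $G$ be an operator defined over $\Sigma^{-1}C$ with $AG=GA=1$; in our applications $G$ is the triangular inverse of the anticommutator. Applying the first part to the identities $AG=1$ and $GA=1$ shows that $1\otimes G$ is a two-sided inverse of $1\otimes A$. Uniqueness of two-sided inverses then shows that it coincides with the inverse computed after specialization. The same reasoning shows that the Green projectors are carried to the specialized projectors.

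The main obstacle is to make the first claim precise in the presence of the nonfree pieces. Specialization is not injective on $\B$, so a $\B$-linear operator could in principle fail to descend compatibly with the chosen identification $\cR_M^{\mathbb A}(S)\cong\mathbb A_q\otimes_\R R(S)$. To handle this, I would use the unique exterior expansion in \eqref{eq:relative-decomposition}: $\RB(\{x\}\cup Y)$ is free over $\cA_x(Y)$, and $\cA_x(Y)$ is free over $\B$. Hence the identification is a basis-level statement, and formulas \eqref{eq:finite-y}--\eqref{eq:finite-x} specialize coefficientwise, with $\qnum{d-|I|}\mapsto\qnum{M-|I|}$ and $Q\mapsto q^M$. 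The Jackson operators have $\mathbb A_q$-coefficients and are unaffected.

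The localization step also needs care. The hypothesis must be read as saying that no element of $\Sigma$ maps to zero in $C_M$, not merely in $\mathbb A_q$; for $\Sigma\subset\B$ the two readings agree. This nonvanishing is exactly what prevents the localized specialized module from collapsing.
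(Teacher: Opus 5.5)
Your proposal is correct and follows essentially the same route as the paper. Base change along $\sigma_M$ and then to $\R(q)$ preserves sums, compositions and equalities of linear maps, and the universal property of commutative localization identifies the specialized localized block with the localization of $\cR_M$ by $\sigma_M(\Sigma)$. Where the paper says inverses specialize entrywise because the determinant factors stay invertible, you use functoriality plus uniqueness of two-sided inverses, which amounts to the same thing.

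The ``obstacle'' you flag is not a real one. Every $\B$-linear map descends under $\mathbb A_q\otimes_{\sigma_M,\B}(-)$, and the identification with $\mathbb A_q\otimes_{\R}R(S)$ is canonical by associativity of tensor products, so your freeness argument is harmless but unnecessary.
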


\begin{proof}
Tensor first with $\mathbb A_q$ via $\sigma_M$ and then extend scalars to $\R(q)$.  Both operations preserve sums, compositions, and equality of linear maps.  If $0\notin\sigma_M(\Sigma)$, the universal property of commutative localization gives
\[
 \R(q)\otimes_{\sigma_M,\B}\RB(S)_\Sigma
 \cong \cR_M(S)_{\sigma_M(\Sigma)}.
\]
Matrices and their inverses specialize entrywise because their determinant factors remain invertible.
\end{proof}

\begin{corollary}[One-vector radial superspace calculus]\label{cor:one-vector-super}
On $\cR_M(\{x\})$,
\begin{equation}\label{eq:one-vector-super}
        \partial^{M,\mathrm R}_{x,q}(x^{2a})=\qnum{2a}x^{2a-1},
        \qquad
        \partial^{M,\mathrm R}_{x,q}(x^{2a+1})=\qnum{M+2a}x^{2a}.
\end{equation}
After $q\to1$, these are the classical radial formulas in superdimension $M$.
\end{corollary}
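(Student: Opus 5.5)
The plan is to obtain \eqref{eq:one-vector-super} by applying Theorem~\ref{thm:base-change} directly to the universal one-vector formulas \eqref{eq:one-vector-formal}.

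\emph{Universal formulas.} Take $Y=\varnothing$. The relative decomposition \eqref{eq:relative-decomposition} then reduces to
\[
\RB(\{x\})=\cA_x(\varnothing)\oplus x\,\cA_x(\varnothing),
\qquad \cA_x(\varnothing)=\B[r],\quad r=x^2 .
\]
Write $x^{2a}=r^a$ and $x^{2a+1}=x\,r^a$. The scalar-input operator \eqref{eq:scalar-operator} has no $y_i$-terms, so $\partialsc=(1+q)x\,\delta_{q^2}^{(r)}$. A Jackson computation gives
\[
\delta_{q^2}^{(r)}r^a=\frac{1-q^{2a}}{1-q^2}\,r^{a-1},
\qquad (1+q)\,\frac{1-q^{2a}}{1-q^2}=\qnum{2a}.
\]
Hence \eqref{eq:finite-y} with $I=\varnothing$ yields the even formula. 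For the odd formula, apply \eqref{eq:finite-x} with $I=\varnothing$ and $B=r^a$:
\[
\partialR(x\,r^a)=\qnum{d}\,r^a+q^d\,x\cdot(1+q)x\,\delta_{q^2}^{(r)}r^a
=\Bigl(\qnum{d}+q^d\qnum{2a}\Bigr)r^a .
\]
The standard identity $\qnum{d}+q^d\qnum{2a}=\qnum{d+2a}$ reads, in terms of $Q$,
\[
\frac{1-Q}{1-q}+Q\,\frac{1-q^{2a}}{1-q}=\frac{1-Qq^{2a}}{1-q}.
\]
This recovers \eqref{eq:one-vector-formal} as an identity over $\B$; the paper has in fact already recorded it.

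\emph{Specialization.} These formulas are $\B$-linear identities between the derivative and monomials. They are built from vector multiplication and central coefficients, and involve no localization. Theorem~\ref{thm:base-change} therefore transfers them to $\cR_M^{\mathbb A}(\{x\})$ and then to $\cR_M(\{x\})$ by applying $\sigma_M$. The only computation needed is that $\sigma_M$ sends $\qnum{d+2a}=(1-Qq^{2a})/(1-q)$ to $(1-q^{M+2a})/(1-q)=\qnum{M+2a}$, while it fixes $\qnum{2a}$. No localization is involved, so no nonvanishing condition arises; negative $M$ is allowed because $q^{-1}\in\mathbb A_q$.

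\emph{Classical limit.} As $q\to1$, $\qnum{2a}\to2a$ and $\qnum{M+2a}\to M+2a$. These are the classical radial values $\partial^{\mathrm R}(x^{2a})=2a\,x^{2a-1}$ and $\partial^{\mathrm R}(x^{2a+1})=(M+2a)\,x^{2a}$ in superdimension $M$, consistent with \eqref{eq:superdimension-evaluation} at $a=0$.

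I expect no genuine obstacle. The only points requiring care are the bookkeeping for right-written scalar coefficients in \eqref{eq:finite-x}, and confirming that the $q$-number identity holds with $Q$ independent rather than only after specialization; the latter is the one-line computation displayed above.
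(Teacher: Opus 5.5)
Your proposal is correct and follows the paper's route. The paper obtains the corollary by applying $\sigma_M$ (Theorem~\ref{thm:base-change}, or directly the definition of $1\otimes\partialR$) to the universal one-vector formulas \eqref{eq:one-vector-formal}, as you do. Your re-derivation of those formulas from \eqref{eq:finite-y}--\eqref{eq:finite-x}, via the identity $\qnum{d}+q^d\qnum{2a}=\qnum{d+2a}$ over $\B$, is also accurate.
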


\subsection{Faithful finite coordinate blocks}

Let
\[
        \pi_{m|2n,T}:R(T)\longrightarrow\cA_{m,2n}
\]
be the standard radial superspace representation of a finite set $T$ of abstract vectors by independent supervector variables.  Extend scalars first from $\R$ to $\mathbb A_q$ and then to $\R(q)$, obtaining representations of both $\cR_M^{\mathbb A}(T)$ and $\cR_M(T)$, where $M=m-2n$.

\begin{proposition}[A sufficient faithfulness range]\label{prop:faithful-super-block}
If $|T|\le m$, then $\pi_{m|2n,T}$ is injective.  Hence the specialized operator \eqref{eq:specialized-derivative} has an unambiguous coordinate realization on the represented finite radial superspace block.
\end{proposition}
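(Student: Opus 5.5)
Write $T=\{v_1,\dots,v_\ell\}$ with $\ell\le m$. The plan is to reduce injectivity to a statement about scalar invariants and then show that the relevant Gram invariants are algebraically independent in the coordinate realization. By the unique exterior expansion in a finitely generated radial algebra \cite{Sommen1997,DeSchepperGuzmanSommen2017}, every element of $R(T)$ can be written uniquely as $\sum_I v_I\,A_I$. Here $v_I=v_{i_1}\wedge\cdots\wedge v_{i_p}$, and the coefficients $A_I$ are polynomials in the scalars $\{v_i,v_j\}$, $1\le i\le j\le \ell$. Moreover, the scalar subalgebra of $R(T)$ is the free polynomial ring in these $\binom{\ell+1}{2}$ generators.

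Under $\pi=\pi_{m|2n,T}$, each $v_i$ goes to an independent supervector $x^{(i)}=\sum_j x^{(i)}_je_j+\sum_k\grave x^{(i)}_k\grave e_k$. Each scalar $\{v_i,v_j\}$ goes to a superscalar $-2\langle x^{(i)},x^{(j)}\rangle$ (up to the paper's sign normalization). This superscalar has a bosonic part $\sum_j x^{(i)}_jx^{(j)}_j$ and a fermionic (symplectic) part that is nilpotent.

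\textbf{Step 1 (scalars).} I show that $\pi$ is injective on the scalar subalgebra. Suppose $P(\{v_i,v_j\})\mapsto0$. Projecting onto the body (setting all fermionic coordinates to zero) gives $P(\langle \underline x^{(i)},\underline x^{(j)}\rangle)=0$ as a polynomial in the $m\ell$ bosonic coordinates. Since $\ell\le m$, the Gram map $\R^{m\times\ell}\to\mathrm{Sym}_\ell$, $X\mapsto X^TX$, has image containing an open set: every positive definite matrix is $X^TX$ for some $X$ with $\ell\le m$ columns available. Therefore the entries of the Gram matrix are algebraically independent, and $P=0$. This is exactly where $|T|\le m$ is used.

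\textbf{Step 2 (blades).} I show that the images $\pi(v_I)$ are linearly independent over the image of the scalar ring. Again I take bodies and restrict further to the purely bosonic orthogonal part, so that the wedge $x^{(i_1)}\wedge\cdots\wedge x^{(i_p)}$ becomes $\sum_{J}\det(X_{J,I})\,e_J$, with $J\subset\{1,\dots,m\}$ and $|J|=p$. Suppose $\sum_I \pi(v_I)\pi(A_I)=0$, and let $p$ be the top degree with some $A_I\ne0$. The $e_J$-components with $|J|=p$ give, for every $J$, the relation $\sum_{|I|=p}\det(X_{J,I})\,A_I(X^TX)=0$; lower Clifford grades cannot contribute to grade $p$ in the top part. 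I evaluate at a generic $X$ of full rank $\ell$. Then the matrix of $p\times p$ minors $(\det X_{J,I})_{J,I}$, which is the $p$-th compound matrix, has full column rank $\binom{\ell}{p}$. Hence all $A_I(X^TX)=0$ on an open set, so $A_I=0$ by Step 1. Descending induction on $p$ then finishes the injectivity proof.

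\textbf{Extension of scalars.} Since $\mathbb A_q$ and $\R(q)$ are flat (free, respectively a localization of a free module) over $\R$, tensoring preserves injectivity. The unambiguous coordinate realization then follows: the operator \eqref{eq:specialized-derivative} preserves $\cR_M^{\mathbb A}(T')$ for finite $T'\supset\{x\}$ by Theorem~\ref{thm:direct-limit}, so it transports through the isomorphism onto the image.

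The main obstacle I expect is Step 2. One must make sure that ignoring the fermionic and symplectic parts is legitimate. Restricting to the body and to the $e_j$-components is an algebra homomorphism onto the orthogonal Clifford algebra with polynomial coefficients, so any relation in $\cA_{m,2n}$ descends to a relation there. The compound-matrix rank claim is the other point to verify carefully, via the Cauchy--Binet formula.
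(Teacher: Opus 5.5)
Your argument is correct and follows the same route as the paper. You set the fermionic coordinates to zero to land in the ordinary bosonic Clifford-polynomial representation in dimension $m$, pull injectivity back from there when $|T|\le m$, and then extend scalars over a field. The only difference is that the paper cites Sommen's 1997 result for the bosonic injectivity, whereas you prove it directly, via algebraic independence of the Gram entries and full column rank of the compound matrix $(\det X_{J,I})$ at full-rank $X$; that argument is valid and makes the proof self-contained.
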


\begin{proof}
Set all fermionic coordinates equal to zero.  Composing $\pi_{m|2n,T}$ with this specialization gives the ordinary Clifford-polynomial representation of $R(T)$ in bosonic dimension $m$.  That representation is injective when $m\ge |T|$ \cite{Sommen1997}.  If an element lies in the kernel of the superspace representation, then its bosonic restriction is zero; injectivity of the latter representation forces the original radial element to be zero.  Moreover, $\cR_M(T)$ identifies canonically with $\R(q)\otimes_{\R}R(T)$.  Extending the injective $\R$-linear representation from $\R$ to the field $\R(q)$ therefore preserves injectivity.
\end{proof}

\begin{definition}[Coordinate right $q$-super derivative]\label{def:coordinate-super-derivative}
Assume $T=\{x\}\cup Y$ and $|T|\le m$.  On the image
\[
        R_{m|2n}(T;q):=\pi_{m|2n,T}(\cR_M(T))
\]
define
\begin{equation}\label{eq:coordinate-conjugation}
\partial^{m|2n,Y,\mathrm R}_{x,q}
 :=\pi_{m|2n,T}\,\partial^{M,Y,\mathrm R}_{x,q}\,\pi_{m|2n,T}^{-1}.
\end{equation}
\end{definition}

\begin{remark}\label{rem:nonfaithful-descent}
For a nonfaithful coordinate block, \eqref{eq:coordinate-conjugation} is available if and only if the specialized derivative preserves the representation kernel.  No such blanket assertion is made here.  All universal statements below are theorems in $\cR_M(S)$, while coordinate versions are asserted in the faithful range of Proposition~\ref{prop:faithful-super-block} or under an independently verified kernel-invariance condition.
\end{remark}

\section{Finite right \texorpdfstring{$q$}{q}-vector calculus in radial superspace}\label{sec:finite-super}

Fix $Y=\{y_1,\ldots,y_N\}$ and $M\in\mathbb Z$.  The scalar variables remain those of \eqref{eq:relative-scalars}; only the coefficient $Q$ is specialized to $q^M$.

\begin{proposition}[Specialized finite formulas]\label{prop:specialized-finite}
On $\cR_M(\{x\}\cup Y)$,
\begin{align}
\partial^{M,Y,\mathrm R}_{x,q}(y_IA)
 &=y_I\partial^{\mathrm{sc},M,Y}_{x,q}(A),\label{eq:super-finite-y}\\
\partial^{M,Y,\mathrm R}_{x,q}((x\wedge y_I)B)
 &=\qnum{M-|I|}y_IB
   +q^{M-|I|}(x\wedge y_I)\partial^{\mathrm{sc},M,Y}_{x,q}(B),\label{eq:super-finite-x}
\end{align}
where
\begin{equation}\label{eq:super-scalar-operator}
\partial^{\mathrm{sc},M,Y}_{x,q}
 =(1+q)x\delta_{q^2}^{(r)}
 +2T_{q^2}^{(r)}\sum_{i=1}^Ny_i\delta_q^{(s_i)}.
\end{equation}
The same formulas hold in every faithful coordinate block through \eqref{eq:coordinate-conjugation}.
\end{proposition}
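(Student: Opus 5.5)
The plan is to obtain the formulas by applying the specialization $\sigma_M$ to the universal finite formulas \eqref{eq:finite-y}--\eqref{eq:finite-x}, using Theorem~\ref{thm:base-change}, and then to transport them to coordinates through Definition~\ref{def:coordinate-super-derivative}.

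First we check that the relative exterior decomposition \eqref{eq:relative-decomposition} survives base change. It is a direct sum of free $\cA_x(Y)$-modules, and $\cA_x(Y)$ is a polynomial ring over $\B$ in the scalar variables $r,s_i,c_{ij}$. Since tensor product commutes with direct sums, applying $\mathbb A_q\otimes_{\sigma_M,\B}(-)$ and then $\R(q)\otimes_{\mathbb A_q}(-)$ yields the same decomposition over $\R(q)[r,s_i,c_{ij}]$. Consequently every element of $\cR_M(\{x\}\cup Y)$ is a finite sum of terms $y_IA$ and $(x\wedge y_I)B$ with specialized scalar coefficients. It therefore suffices to verify the two formulas on such terms, because both sides are linear.

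Next we note that the scalar-input operator \eqref{eq:scalar-operator} does not involve $Q$. Its ingredients are left multiplication by $x$ and $y_i$, the Jackson differences $\delta_{q^2}^{(r)}$ and $\delta_q^{(s_i)}$, and the dilation $T_{q^2}^{(r)}$. The Jackson differences are well defined on polynomials over $\mathbb A_q$, since $\delta_P^{(u)}u^k=\frac{1-P^k}{1-P}u^{k-1}$ and the coefficients $(1-P^k)/(1-P)$ are polynomial in $q$. Hence $1\otimes\partialsc$ is exactly $\partial^{\mathrm{sc},M,Y}_{x,q}$ as written in \eqref{eq:super-scalar-operator}. Applying $1\otimes(-)$ to \eqref{eq:finite-y} gives \eqref{eq:super-finite-y} directly.

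For \eqref{eq:super-finite-x}, the $Q$-dependence sits only in the coefficients. By \eqref{eq:formal-dimension}, $\sigma_M(\qnum{d-|I|})=\sigma_M\bigl((1-Qq^{-|I|})/(1-q)\bigr)=\qnum{M-|I|}$ and $\sigma_M(q^{d-|I|})=q^{M-|I|}$. These computations use the fact that $q^{-1}$ and $(1-q)^{-1}$ lie in $\mathbb A_q$, so they are valid for every integer $M$. Theorem~\ref{thm:base-change} says that $\B$-linear identities remain valid after $\sigma_M$, which yields \eqref{eq:super-finite-x} on the regular form. Extending scalars to $\R(q)$ keeps the identities because it is flat.

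For the coordinate statement, assume $|\{x\}\cup Y|\le m$. Proposition~\ref{prop:faithful-super-block} makes $\pi_{m|2n,T}$ injective, and it is an algebra map that is identity on scalars. The conjugation \eqref{eq:coordinate-conjugation} therefore carries the identities over verbatim, with $x$ and $y_i$ replaced by their supervector images and $r,s_i$ replaced by the corresponding represented scalars.

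The only step needing care is the well-definedness point: the Jackson difference operators must act on the specialized scalar polynomial ring without needing any localization beyond $\mathbb A_q$. Once that is settled, the localization clause of Theorem~\ref{thm:base-change} is not needed here, and the result is a direct specialization.
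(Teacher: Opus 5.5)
Your proposal is correct and follows the paper's proof. The paper applies Theorem~\ref{thm:base-change} to \eqref{eq:finite-y}--\eqref{eq:finite-x} and gets the coordinate version from Proposition~\ref{prop:faithful-super-block} and Definition~\ref{def:coordinate-super-derivative}; you additionally spell out the routine checks that this one-line argument leaves implicit ($\partialsc$ is $Q$-free, $\sigma_M(\qnum{d-|I|})=\qnum{M-|I|}$, and no localization is needed), though your appeal to flatness is unnecessary, since extension of scalars preserves equalities of linear maps anyway.
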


\begin{proof}
Apply Theorem~\ref{thm:base-change} to \eqref{eq:finite-y}--\eqref{eq:finite-x}.  The coordinate statement follows from Proposition~\ref{prop:faithful-super-block} and Definition~\ref{def:coordinate-super-derivative}.
\end{proof}

\begin{example}[One auxiliary vector]\label{ex:one-aux}
Let $Y=\{y\}$ and write
\[
        r=x^2,\qquad s=\{x,y\},\qquad t=y^2,
\]
so that $c_{11}=\{y,y\}=2t$.  Then
\[
\partial^{\mathrm{sc},M,\{y\}}_{x,q}
 =(1+q)x\delta_{q^2}^{(r)}+2T_{q^2}^{(r)}y\delta_q^{(s)},
\]
and
\[
        \partial^{M,\{y\},\mathrm R}_{x,q}(x\wedge y)=\qnum{M-1}y.
\]
\end{example}

\section{Localized decompositions}\label{sec:decompositions}

\subsection{Exterior Green decomposition}

For finite $Y$, put
\[
 \cA_x^M(Y):=\R(q)[r,s_1,\ldots,s_N,c_{ij}:1\le i\le j\le N]
\]
and set
\[
 \cY_{M;x;Y}:=\bigoplus_I y_I\cA_x^M(Y),
 \qquad
 \cX_{M;x;Y}:=\bigoplus_I(x\wedge y_I)\cA_x^M(Y).
\]
Exterior creation is
\begin{equation}\label{eq:creation}
 \cC_x^Y(y_IA)=(x\wedge y_I)A,
 \qquad
 \cC_x^Y((x\wedge y_I)B)=0.
\end{equation}
After specialization, define
\begin{equation}\label{eq:H-super}
 \cH_{M;x,q}^Y
 :=\partial^{M,Y,\mathrm R}_{x,q}\cC_x^Y
   +\cC_x^Y\partial^{M,Y,\mathrm R}_{x,q}.
\end{equation}
The scalar $q$-Euler operator is
\begin{equation}\label{eq:q-euler-super}
 E^{\mathrm{sc},M,Y}_{x,q}
 =(1+q)M_r\delta_{q^2}^{(r)}
 +T_{q^2}^{(r)}\sum_{i=1}^NM_{s_i}\delta_q^{(s_i)},
\end{equation}
where $M_u$ denotes multiplication by $u$.  For $p\ge0$, put
\begin{equation}\label{eq:Delta-super}
 \Delta^Y_{M,p,q}
 :=\qnum{M-p}I+(-1)^pq^{M-p}E^{\mathrm{sc},M,Y}_{x,q}.
\end{equation}
On $r^as^\beta P(c)$ its eigenvalue is
\begin{equation}\label{eq:mu-super}
 \mu^M_{p,a,\beta}(q)
 =\qnum{M-p}+(-1)^pq^{M-p}
 \left(\qnum{2a}+q^{2a}\sum_{i=1}^N\qnum{\beta_i}\right).
\end{equation}
Before specialization, let $\Omega^Y_x$ be the central multiplicative set generated by the formal factors obtained from \eqref{eq:mu-super} by replacing $M$ with $d$.  We call the block Green-nonresonant at $M$ when
\[
        0\notin\sigma_M(\Omega^Y_x).
\]
In that case put $\Omega^Y_{M;x}:=\sigma_M(\Omega^Y_x)$.

\begin{theorem}[Triangular exterior anticommutator]\label{thm:triangular-super}
Relative to $\cY_{M;x;Y}\oplus\cX_{M;x;Y}$, the operator \eqref{eq:H-super} has the form
\begin{equation}\label{eq:H-super-block}
 \cH_{M;x,q}^Y
 =\begin{pmatrix}
   \displaystyle\bigoplus_I\Delta^Y_{M,|I|,q}&0\\
   \mathcal N^Y_{M;x,q}&\displaystyle\bigoplus_I\Delta^Y_{M,|I|,q}
  \end{pmatrix}
\end{equation}
for a certain lower off-diagonal operator $\mathcal N^Y_{M;x,q}$.
\end{theorem}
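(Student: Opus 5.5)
The plan is to compute $\cH_{M;x,q}^Y$ directly on the two summands of the decomposition $\cY_{M;x;Y}\oplus\cX_{M;x;Y}$. We use \eqref{eq:creation} together with the specialized formulas \eqref{eq:super-finite-y}--\eqref{eq:super-finite-x} of Proposition~\ref{prop:specialized-finite}. By Theorem~\ref{thm:base-change} it is equivalent to do the computation formally, with $Q$ in place of $q^M$, and then specialize.

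The upper-right zero block is immediate. For $(x\wedge y_I)B\in\cX$ we have $\cC_x^Y((x\wedge y_I)B)=0$, so
\[
\cH^Y_{M;x,q}((x\wedge y_I)B)=\cC_x^Y\partial^{M,Y,\mathrm R}_{x,q}((x\wedge y_I)B),
\]
and this lies in $\cX$ because the image of $\cC_x^Y$ is $\cX$. For $y_IA\in\cY$ we have
\[
\cH^Y_{M;x,q}(y_IA)=\qnum{M-p}\,y_IA+q^{M-p}(x\wedge y_I)\,\partial^{\mathrm{sc},M,Y}_{x,q}(A)+\cC_x^Y\bigl(y_I\,\partial^{\mathrm{sc},M,Y}_{x,q}(A)\bigr),
\]
where $p=|I|$. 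The last term lies in $\cX$, so its contribution is absorbed into $\mathcal N^Y_{M;x,q}$. The $\cY\to\cY$ block is therefore $\qnum{M-p}$ plus the $\cY$-component of $q^{M-p}(x\wedge y_I)\partial^{\mathrm{sc}}(A)$.

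To extract these components, write $\partial^{\mathrm{sc}}A=(1+q)\,x\,\delta^{(r)}_{q^2}A+2\sum_i y_i\,T^{(r)}_{q^2}\delta^{(s_i)}_qA$. We then need the $x$-relative exterior expansions of the products $(x\wedge y_I)x$ and $(x\wedge y_I)y_i$. The key auxiliary lemma will state that the $\cY$-component of $(x\wedge y_I)x$ is $(-1)^p y_I\,r$ plus lower-support terms $y_{I\setminus\{j\}}$ with coefficients built from the $s_j$. Likewise, the $\cY$-component of $(x\wedge y_I)y_i$ is $(-1)^p y_I\,s_i/2$ plus lower-support terms with coefficients $c_{ij}$ and $s_j$. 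The leading parts then reproduce exactly $(-1)^pq^{M-p}E^{\mathrm{sc},M,Y}_{x,q}$. The factor $2\cdot\tfrac12$ converts the $y_i$-terms into $M_{s_i}\delta_q^{(s_i)}$, and $(1+q)M_r\delta^{(r)}_{q^2}$ comes from the $x$-term.

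The same lemma handles the $\cX\to\cX$ block. Its $\cX$-diagonal part is $\qnum{M-p}(x\wedge y_I)B$ plus $\cC_x^Y$ applied to the $\cY$-component of $q^{M-p}(x\wedge y_I)\partial^{\mathrm{sc}}B$, which is again $\Delta^Y_{M,p,q}$. This relies on $\cC_x^Y$ acting as the identity on scalar coefficients, sending $y_J$ to $x\wedge y_J$.

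The main obstacle is showing that the lower-support contributions $y_{I\setminus\{j\}}$, and any $y_{I\cup\{i\}}$ terms, all cancel, so that the diagonal blocks are genuinely $\bigoplus_I\Delta^Y_{M,|I|,q}$ and not merely triangular in support. I would first check whether, in the paper's convention (right scalar coefficients, $\partial^{\mathrm{sc}}$ acting with vectors on the left of the coefficients), the products $(x\wedge y_I)x$ and $(x\wedge y_I)y_i$ have exterior expansions with no $\cY$-blade other than $y_I$. I expect this to hold because $x\wedge y_I$ is antisymmetric in $x$. Concretely, I would prove $(x\wedge y_I)x=(-1)^p y_I r+\cX$-terms and $(x\wedge y_I)y_i=(-1)^p y_I\,s_i/2+\cX$-terms plus $\cY$-terms of support $p+1$ vanishing on the diagonal. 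I would verify this by induction on $p$ using the radial relation \eqref{eq:radial-relation} and the wedge/contraction rules $vw=v\wedge w+\tfrac12\{v,w\}$. If stray lower-support terms do survive, they can still be absorbed. In that case I would redefine the block decomposition, noting that Theorem~\ref{thm:triangular-super} only asserts the diagonal blocks, and check the cancellation in the style of the alternating blade identity announced for Section~\ref{sec:resonance}.
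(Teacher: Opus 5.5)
Your block bookkeeping follows the paper's proof. The upper-right block vanishes because $\cC_x^Y$ kills $\cX$ and has image in $\cX$. The term $\cC_x^Y\partial^{M,Y,\mathrm R}_{x,q}(y_IA)$ lies in $\cX$ and goes into $\mathcal N^Y_{M;x,q}$. Both diagonal blocks reduce to $\qnum{M-p}$ plus $q^{M-p}$ times the $\cY$-component of $(x\wedge y_I)\partial^{\mathrm{sc},M,Y}_{x,q}(A)$.

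\textbf{The gap.} You never show that this $\cY$-component is exactly $(-1)^p y_I E^{\mathrm{sc},M,Y}_{x,q}A$. The specific problems are these.
\begin{enumerate}
\item Your first version of the auxiliary lemma is false: it places lower-support blades $y_{I\setminus\{j\}}$ in $\cY$.
\item Your revised version still allows spurious ``$\cY$-terms of support $p+1$''.
\item The stated reason, ``antisymmetric in $x$'', is only a heuristic.
\item The fallback of absorbing surviving terms or redefining the decomposition would not prove the statement. The theorem asserts that the diagonal blocks are exactly $\bigoplus_I\Delta^Y_{M,|I|,q}$, and that exactness is what makes $G^{\mathrm{gr},Y}_{M;x,q}$ the diagonal part of the inverse in Theorem~\ref{thm:green-super}.
\item The alternating blade identity does not apply here, because nothing needs to cancel.
\end{enumerate}

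\textbf{The missing idea is a placement fact, not a cancellation.} Since anticommutators of vectors are central,
\[
(a_1\wedge\cdots\wedge a_k)\,v=a_1\wedge\cdots\wedge a_k\wedge v+\sum_{l=1}^{k}(-1)^{k-l}\,\tfrac12\{a_l,v\}\,a_1\wedge\cdots\wedge\widehat{a_l}\wedge\cdots\wedge a_k .
\]
Take $k=p+1$, $a_1=x$ and $a_{l+1}=y_{i_l}$.
\begin{itemize}
\item The wedge term $x\wedge y_I\wedge v$ lies in $\cX$ or vanishes. So there are no support-$(p+1)$ terms in $\cY$.
\item Every contraction with some $y_{i_l}$ leaves $x$ in the blade. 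It produces a multiple of $x\wedge y_{I\setminus\{i_l\}}$, which also lies in $\cX$.
\item Only the contraction with $a_1=x$ removes $x$, and it carries the sign $(-1)^{p}$.
\end{itemize}
Hence, for every $i$ (including $i\in I$),
\[
\operatorname{pr}_{\cY}\bigl((x\wedge y_I)\,x\bigr)=(-1)^p\,r\,y_I,
\qquad
\operatorname{pr}_{\cY}\bigl((x\wedge y_I)\,y_i\bigr)=(-1)^p\,\tfrac{s_i}{2}\,y_I .
\]
These are exactly the two contraction identities used in the paper's proof.

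The Jackson differences of the scalar $A$ are central, and $T^{(r)}_{q^2}$ commutes with $M_{s_i}$. The two identities therefore give $(-1)^p y_I E^{\mathrm{sc},M,Y}_{x,q}A$. With this step filled in, your argument is complete and coincides with the paper's.
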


\begin{proof}
Let $\operatorname{pr}_{0,x}$ denote projection onto the sector without an exterior $x$.  To see the diagonal block, project $(x\wedge y_I)\partialsc(A)$.  In the unspecialized algebra,
\[
 \operatorname{pr}_{0,x}\bigl((x\wedge y_I)\partialsc(A)\bigr)
 =(-1)^{|I|}y_IE^{\mathrm{sc},Y}_{x,q}A.
\]
Indeed, writing $\partialsc(A)=x\alpha+\sum_jy_j\beta_j$, the relevant contractions are
\[
 \operatorname{pr}_{0,x}((x\wedge y_I)x)=(-1)^{|I|}ry_I,
 \qquad
 \operatorname{pr}_{0,x}((x\wedge y_I)y_j)=(-1)^{|I|}\frac{s_j}{2}y_I.
\]
Equations \eqref{eq:finite-y}--\eqref{eq:finite-x} give the formal triangular matrix whose diagonal is obtained from \eqref{eq:Delta-super} by replacing $M$ with $d$.  Applying $Q\mapsto q^M$ yields \eqref{eq:H-super-block}.
\end{proof}

Define the graded inverse by
\begin{equation}\label{eq:graded-green-super}
 G^{\mathrm{gr},Y}_{M;x,q}
 \bigl(y_Ir^as^\beta P(c)\bigr)
 =(\mu^M_{|I|,a,\beta}(q))^{-1}y_Ir^as^\beta P(c),
\end{equation}
with the same formula on the $x$-exterior sector.

\begin{theorem}[Localized exterior Green operator]\label{thm:green-super}
Assume the finite block is Green-nonresonant at $M$.  On $\cR_M(\{x\}\cup Y)_{\Omega^Y_{M;x}}$, the inverse of \eqref{eq:H-super} is
\begin{equation}\label{eq:green-super-matrix}
 \cG^Y_{M;x,q}
 =\begin{pmatrix}
   G^{\mathrm{gr},Y}_{M;x,q}&0\\
   -G^{\mathrm{gr},Y}_{M;x,q}\mathcal N^Y_{M;x,q}G^{\mathrm{gr},Y}_{M;x,q}
   &G^{\mathrm{gr},Y}_{M;x,q}
  \end{pmatrix}.
\end{equation}
If $Y\subset Z$, the corresponding Green inverses agree on the smaller finite radial algebra after localization by the union of the two central resonance sets.
\end{theorem}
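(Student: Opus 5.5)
The plan is to verify \eqref{eq:green-super-matrix} directly from the triangular form of Theorem~\ref{thm:triangular-super}, and then handle the direct-limit statement by uniqueness of inverses.

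\textbf{Step 1: the graded inverse.} Each diagonal block $\bigoplus_I\Delta^Y_{M,|I|,q}$ acts diagonally on the monomial basis $y_Ir^as^\beta P(c)$ of $\cY_{M;x;Y}$. The same holds on the $x$-sector, since $\Delta$ acts only on the scalar coefficient. The eigenvalue is $\mu^M_{|I|,a,\beta}(q)$ by \eqref{eq:mu-super}; I would check this by evaluating $E^{\mathrm{sc},M,Y}_{x,q}$ on $r^as^\beta$ via $M_u\delta_P^{(u)}u^k=\qnum{k}_P u^k$, and by noting that the dilation $T_{q^2}^{(r)}$ contributes the factor $q^{2a}$. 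Green-nonresonance says that every such eigenvalue is invertible in the localized ring. Hence $G^{\mathrm{gr},Y}_{M;x,q}$ in \eqref{eq:graded-green-super} is a well-defined two-sided inverse of the diagonal block $D:=\bigoplus_I\Delta^Y_{M,|I|,q}$. This uses that $P(c)$ is untouched by $E^{\mathrm{sc}}$, and it needs a remark that Theorem~\ref{thm:base-change} lets us either localize the formal factors and then specialize, or do it in the opposite order.

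\textbf{Step 2: block inversion.} Write $\cH=\begin{pmatrix}D&0\\ \mathcal N&D\end{pmatrix}$ and $G=G^{\mathrm{gr}}$. Multiplying out gives $\cH\cG=\begin{pmatrix}DG&0\\ \mathcal NG-DG\mathcal NG&DG\end{pmatrix}$. Since $DG=I$, the lower-left entry vanishes, so $\cH\cG=I$. In the other order, $\cG\cH$ has lower-left entry $-G\mathcal NGD+G\mathcal N=0$, because $GD=I$. Both products are therefore the identity. The localization is central, so these operator identities make sense on $\cR_M(\{x\}\cup Y)_{\Omega^Y_{M;x}}$; $\cH$ extends there because all its ingredients are $\R(q)$-linear and commute with central scalars from $\B$-coefficients.

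\textbf{Step 3: compatibility for $Y\subset Z$.} I expect this step to be the main obstacle, because $\cG^Z$ is defined using a projector decomposition relative to $Z$. The plan is as follows.
\begin{itemize}
\item By Theorem~\ref{thm:direct-limit}, $\partial^{M,Z,\mathrm R}_{x,q}$ restricts to $\partial^{M,Y,\mathrm R}_{x,q}$ on the smaller algebra.
\item Exterior creation restricts in the same way: blades $y_I$ with $I\subset Y$ are blades for $Z$, and the $x$-relative decomposition \eqref{eq:relative-decomposition} for $Y$ is the restriction of the one for $Z$, since the scalars $s_i,c_{ij}$ with new indices do not occur.
\item Hence $\cH^Z$ restricts to $\cH^Y$. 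On this subspace the eigenvalues $\mu$ coincide: the new $\beta_i$ are zero, so $\qnum{\beta_i}=0$, and $\Omega^Y_x$ embeds in $\Omega^Z_x$.
\end{itemize}
After localizing by the union of the two resonance sets, the smaller localized algebra is a $\cH^Z$-invariant subspace. Both $\cH^Z$ and its restriction are bijective there. The restriction is bijective by Step 2 applied to $Y$, with the extra localization harmless. It follows that $\cG^Z$ maps the subspace into itself: for $v$ in the subspace, $v=\cH^Y w$ for some $w$ in the subspace, and then $\cG^Z v=\cG^Z\cH^Z w=w=\cG^Y v$.

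The delicate point is that localization by the union does not create zero divisors. It also requires that the smaller algebra, localized, injects into the larger one, which is where central localization over the polynomial coefficient rings is used.
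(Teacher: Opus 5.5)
Your proposal is correct and follows the paper's proof. The inverse is the standard inverse of the lower-triangular block matrix with the graded inverse on both diagonal blocks, and compatibility for $Y\subset Z$ comes from $\cH^Z$ restricting to $\cH^Y$; you close this with uniqueness of inverses on the invariant subspace, while the paper notes that the graded factors and the restricted blocks coincide, which amounts to the same thing. The ``delicate point'' you flag at the end is automatic: the specialized Green factors are nonzero elements of the field $\R(q)$, so they are already units in $\cR_M(\{x\}\cup Y)$.
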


\begin{proof}
Formula \eqref{eq:green-super-matrix} is the inverse of the lower triangular matrix \eqref{eq:H-super-block}.  If $Y\subset Z$, the smaller algebra is invariant under both anticommutators, their restrictions agree by Theorem~\ref{thm:direct-limit}, and the graded inverses act by the same factors on monomials supported in $Y$.  Hence the inverses agree on the smaller block.
\end{proof}

\begin{corollary}[Exterior Green decomposition]\label{cor:green-decomposition}
Define
\begin{equation}\label{eq:green-projectors-super}
 P_{\partial,M}^Y
 :=\partial^{M,Y,\mathrm R}_{x,q}\cC_x^Y\cG^Y_{M;x,q},
 \qquad
 P_{\cC,M}^Y
 :=\cC_x^Y\partial^{M,Y,\mathrm R}_{x,q}\cG^Y_{M;x,q}.
\end{equation}
Then these are complementary projections and
\begin{equation}\label{eq:green-decomposition-super}
 \cR_M(\{x\}\cup Y)_{\Omega^Y_{M;x}}
 =\im P_{\partial,M}^Y\oplus\im P_{\cC,M}^Y,
\end{equation}
with
\[
 \im P_{\partial,M}^Y\subseteq\im\partial^{M,Y,\mathrm R}_{x,q},
 \qquad
 \im P_{\cC,M}^Y\subseteq\im\cC_x^Y.
\]
The same statement holds in every faithful coordinate block.  If every finite block is Green-nonresonant at a fixed $M$, the compatible finite projectors induce
\begin{equation}\label{eq:global-green-super}
 \cR_M(S)_{\Omega_{M;x}}
 =\im P_{\partial,M}\oplus\im P_{\cC,M},
\end{equation}
where $\Omega_{M;x}$ is generated by all specialized finite Green factors.
\end{corollary}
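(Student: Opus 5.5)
The plan is the standard argument for two operators whose anticommutator is invertible. Write $\partial=\partial^{M,Y,\mathrm R}_{x,q}$, $\cC=\cC_x^Y$, $\cH=\cH^Y_{M;x,q}=\partial\cC+\cC\partial$ and $\cG=\cG^Y_{M;x,q}$.

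\textbf{Step 1: inputs.} By Theorem~\ref{thm:green-super}, $\cG$ is a two-sided inverse of $\cH$ on the localized block $\cR_M(\{x\}\cup Y)_{\Omega^Y_{M;x}}$. Both $\partial$ and $\cC$ extend to this localization, because $\Omega^Y_{M;x}$ consists of central scalars and both operators are linear over the coefficients. Directly from \eqref{eq:creation}, $\cC^2=0$: it kills the $x$-exterior sector and maps everything into that sector.

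\textbf{Step 2: $\cG$ commutes with $\cC$.} Since $\cC^2=0$, we have $\cC\cH=\cC\partial\cC=\cH\cC$. Multiplying this on both sides by $\cG$ gives $\cG\cC=\cC\cG$.

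\textbf{Step 3: projector identities.} First, $P_{\partial,M}^Y+P_{\cC,M}^Y=\cH\cG=I$. Next,
\[
P_{\cC,M}^YP_{\partial,M}^Y=\cC\partial\cG\,\partial\cC\cG=\cC\partial\,\partial\cG\cC\cG=\cC\partial\,\partial\cC\cG\cG,
\]
using Step 2 twice (once for $\cG\partial\cC$ and once for $\cG\cC$). To see that this vanishes, I will check two facts: $\partial$ commutes with $\cH$, and $\cC\partial^2\cC=0$. The second would follow from $\partial^2\cC=\cC\partial^2$, since then $\cC\partial^2\cC=\cC^2\partial^2=0$.

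That identity is not automatic, because $\partial^2$ need not vanish. I therefore expect this to be the main obstacle, and I will argue differently. Put $A=\partial\cC\cG$ and $B=\cC\partial\cG$. Then $\im B\subseteq\im\cC$ and $\cC B=0$. Also $B\cC=\cC\partial\cC\cG=\cC\cH\cG=\cC$, using $\cG\cC=\cC\cG$ and $\cC\partial\cC=\cC\cH$. So $B$ restricts to the identity on $\im\cC$, and since $\im B\subseteq\im\cC$ this gives $B^2=B$.

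\textbf{Step 4: complementarity and the image inclusions.} From $B^2=B$ and $A=I-B$ it follows that $A^2=A$ and $AB=BA=0$. Hence $A$ and $B$ are complementary projections, which gives \eqref{eq:green-decomposition-super}. The inclusions are immediate: $\im A\subseteq\im\partial$ and $\im B\subseteq\im\cC$.

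\textbf{Step 5: coordinate blocks.} In a faithful coordinate block, transport everything through the injective representation, using Proposition~\ref{prop:faithful-super-block} and \eqref{eq:coordinate-conjugation}.

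\textbf{Step 6: the global statement.} For $Y\subset Z$, the restrictions of $\partial$ and $\cC$ agree by Theorem~\ref{thm:direct-limit} and by the definition of $\cC$. The Green inverses agree by Theorem~\ref{thm:green-super}. Hence the finite projectors are compatible after localizing at the union of the resonance sets. Taking the direct limit over finite $Y$, localized at $\Omega_{M;x}$, gives compatible global projectors $P_{\partial,M}$ and $P_{\cC,M}$ satisfying the same identities, and this yields \eqref{eq:global-green-super}. Localization is exact and commutes with direct limits, so the direct sum decomposition passes to the limit.
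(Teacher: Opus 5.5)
Your argument is correct, but it is organized differently from the paper's proof.

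\textbf{The paper's route.} The paper works in $2\times2$ block form relative to $\cY_{M;x;Y}\oplus\cX_{M;x;Y}$. It writes $\partial=\begin{pmatrix}A&\Delta\\ C&D\end{pmatrix}$, with $\Delta$ the $\cX\to\cY$ block and $D$ the $\cX\to\cX$ block, and $\cC_x^Y=\begin{pmatrix}0&0\\ I&0\end{pmatrix}$. Substituting the explicit triangular inverse \eqref{eq:green-super-matrix} gives $P_{\partial,M}^Y=\begin{pmatrix}I&0\\ D\Delta^{-1}&0\end{pmatrix}$ and $P_{\cC,M}^Y=\begin{pmatrix}0&0\\ -D\Delta^{-1}&I\end{pmatrix}$. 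Idempotence, mutual annihilation and summing to $I$ are then checked by matrix multiplication.

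\textbf{Your route.} You use only $\cC^2=0$ and the fact that $\cG$ is a two-sided inverse of $\cH$. From $\cC\cH=\cC\partial\cC=\cH\cC$ you get $\cG\cC=\cC\cG$, hence $B\cC=\cC$ for $B=\cC\partial\cG$. Then $\im B\subseteq\im\cC$ forces $B^2=B$.

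\textbf{What each buys.} Your argument is more general. It uses the triangular structure of Theorem~\ref{thm:triangular-super} only through the existence of the two-sided inverse $\cG$. It therefore applies to any square-zero operator whose anticommutator with $\partial$ is invertible. It also yields the sharper equality $\im P_{\cC,M}^Y=\im\cC_x^Y$ rather than just the inclusion. The paper's block formulas instead make the decomposition completely explicit: $\im P_{\cC,M}^Y$ is the whole $x$-exterior sector, and $\im P_{\partial,M}^Y$ is the graph of $D\Delta^{-1}$ over $\cY_{M;x;Y}$.

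\textbf{A small fix.} The first computation in your Step~3 is not justified as written. Step~2 gives $\cG\cC=\cC\cG$, but it does not give $\cG\partial\cC=\partial\cG\cC$. That would require $\partial$ to commute with $\cH$, which, as you note, fails in general. Since you abandon that line and the $B$-argument does not depend on it, simply delete it.
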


\begin{proof}
The identity $I=\cH_{M;x,q}^Y\cG^Y_{M;x,q}$ expands as
\[
 F=\partial^{M,Y,\mathrm R}_{x,q}(\cC_x^Y\cG^Y_{M;x,q}F)
   +\cC_x^Y(\partial^{M,Y,\mathrm R}_{x,q}\cG^Y_{M;x,q}F).
\]
Write
\[
 \partial^{M,Y,\mathrm R}_{x,q}
 =\begin{pmatrix}A&\Delta\\C&D\end{pmatrix},
 \qquad
 \cC_x^Y=\begin{pmatrix}0&0\\I&0\end{pmatrix}
\]
relative to $\cY_{M;x;Y}\oplus\cX_{M;x;Y}$.  Substituting \eqref{eq:green-super-matrix} gives
\[
 P_{\partial,M}^Y=\begin{pmatrix}I&0\\D\Delta^{-1}&0\end{pmatrix},
 \qquad
 P_{\cC,M}^Y=\begin{pmatrix}0&0\\-D\Delta^{-1}&I\end{pmatrix}.
\]
They are idempotent, have zero product in both orders, and sum to the identity.  The coordinate statement follows by conjugation through the faithful representation.  The compatibility assertion in Theorem~\ref{thm:green-super} makes the finite projectors agree under enlargement of $Y$; since every element has finite support, their direct limit gives \eqref{eq:global-green-super}.
\end{proof}

\subsection{Determinant-localized right-monogenic Fischer decomposition}

Let $L_xF=xF$.  Give the finite radial algebra the $x$-degree
\[
 \deg_xr=2,
 \quad\deg_xs_i=1,
 \quad\deg_xc_{ij}=0,
 \quad\deg_xy_I=0,
 \quad\deg_x(x\wedge y_I)=1.
\]
Let $H^Y_{M,k}$ be the degree-$k$ component after specialization and put
\[
 \mathbb T^M_Y:=\R(q)[c_{ij}:1\le i\le j\le N].
\]
It is a finite free $\mathbb T^M_Y$-module.  Define
\begin{equation}\label{eq:K-D-super}
 \cK^Y_{M,k}
 :=\partial^{M,Y,\mathrm R}_{x,q}L_x\bigm|_{H^Y_{M,k}},
 \qquad
 D^Y_{M,k}:=\det\cK^Y_{M,k}\in\mathbb T^M_Y.
\end{equation}

\begin{lemma}[Injectivity of left multiplication]\label{lem:Lx-injective-super}
The map $L_x$ is injective on every finite formal radial superspace block and remains injective after central localization or evaluation at $q_0\in(0,1)$.
\end{lemma}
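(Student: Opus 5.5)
The plan is to compute $L_x$ explicitly in the $x$-relative exterior decomposition \eqref{eq:relative-decomposition} and to exhibit it as block lower triangular. We then show that its diagonal blocks are injective for reasons that survive localization and numerical evaluation.

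The key identities are the following. For a blade $y_I$, the Clifford product splits as $xy_I=x\wedge y_I+x\rfloor y_I$. Here the contraction $x\rfloor y_I$ is a combination of blades $y_{I\setminus\{i\}}$ with coefficients $\pm s_i/2$. Likewise,
\[
x(x\wedge y_I)=r\,y_I-x\wedge(x\rfloor y_I),
\]
and this is the kind of identity already used in the proof of Theorem~\ref{thm:triangular-super}. Hence, relative to $\cY\oplus\cX$, we obtain
\[
L_x=\begin{pmatrix} K & rI\\ I & -K'\end{pmatrix}.
\]
Here $K$ and $K'$ are the contraction operators, which lower the blade rank by one and have coefficients in the $s_i$. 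The identity block comes from $y_IA\mapsto(x\wedge y_I)A$, and the block $rI$ comes from $(x\wedge y_I)B\mapsto ry_IB$.

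To prove injectivity, suppose $L_x(F_0+F_1)=0$ with $F_0\in\cY$ and $F_1\in\cX$. Consider the component of top blade rank $p$ appearing in $F_0$ or $F_1$. The terms $KF_0$ and $K'F_1$ strictly lower the rank. So at rank $p$ the $\cX$-component of the equation gives $F_{0,p}=K'F_{1,p+1}$ restricted to rank $p$. Since $F_{1,p+1}=0$ by maximality, this forces $F_{0,p}=0$. The $\cY$-component at rank $p$ then reads $rF_{1,p}+KF_{0,p+1}=0$, which gives $rF_{1,p}=0$. Inducting downward on $p$ then kills everything, provided multiplication by $r$ is injective on the coefficient ring.

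An alternative that avoids the induction is to use $L_x^2=M_r$, since $x^2=r$ is central. It then suffices that multiplication by $r$ is injective on the module. This holds because the module is free over a polynomial ring $\cA_x^M(Y)$ in which $r$ is a nonzero variable. I would present this as the main argument: $L_xF=0$ implies $rF=L_x^2F=0$, and hence $F=0$.

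It remains to check the three settings. Over $\R(q)$, and over $\mathbb A_q$ in the regular form, $\cA_x^M(Y)$ is a polynomial ring over a domain. Central localization inverts elements of a multiplicative set in a domain that avoids $0$, so the ring stays a domain with $r\ne0$, and the module stays free on the blades. For evaluation at $q_0\in(0,1)$ one must work with the regular form. The evaluation map $\mathbb A_q\to\R$ is defined because $q_0\ne0,1$. The coefficient ring becomes the real polynomial ring $\R[r,s_i,c_{ij}]$, or its localization when the localizing factors evaluate to nonzero values. Freeness of the exterior expansion is preserved by base change, so $r$ remains a nonzerodivisor.

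The main obstacle I expect is to justify rigorously that the exterior expansion remains free with polynomial scalar coefficients after each of these operations. In particular, one needs that the scalars $r,s_i,c_{ij}$ are algebraically independent in the finitely generated radial algebra. I would cite the uniqueness of the exterior expansion \cite{Sommen1997,DeSchepperGuzmanSommen2017} together with flatness of localization and base change for free modules.
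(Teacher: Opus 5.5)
Your main argument is the paper's proof: $L_xF=0$ gives $rF=L_x^2F=0$, and $r$ is a nonzerodivisor. This holds because the block is free on the blades over a polynomial scalar ring in which $r$ is an indeterminate, and that persists under central localization and under evaluation at $q_0$. The block-triangular downward induction you sketch first is also correct, but it rests on the same nonzerodivisor fact, so it is not needed.
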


\begin{proof}
If $xF=0$, then $x^2F=rF=0$.  The relative exterior decomposition makes the radial algebra a free module over its scalar polynomial algebra, and the scalar variable $r$ is not a zero divisor.  Hence $F=0$.  The same free-module argument applies after specialization, numerical evaluation, and central localization, because the evaluated scalar coefficient ring is again a polynomial ring and $r$ remains an indeterminate.
\end{proof}

\begin{theorem}[Finite determinant-localized Fischer decomposition]\label{thm:finite-fischer-super}
If $D^Y_{M,k}\ne0$, then after localization by this determinant,
\begin{equation}\label{eq:finite-fischer-super}
\begin{aligned}
 (H^Y_{M,k+1})_{D^Y_{M,k}}
 &=\Ker\!\left(\partial^{M,Y,\mathrm R}_{x,q}:
      (H^Y_{M,k+1})_{D^Y_{M,k}}
      \longrightarrow(H^Y_{M,k})_{D^Y_{M,k}}\right)\\
 &\quad\oplus L_x(H^Y_{M,k})_{D^Y_{M,k}}.
\end{aligned}
\end{equation}
The projection onto the right-monogenic summand is
\begin{equation}\label{eq:monogenic-projection-super}
 \Pi^Y_{M,k+1}
 =I-L_x(\cK^Y_{M,k})^{-1}\partial^{M,Y,\mathrm R}_{x,q}.
\end{equation}
The same decomposition holds in a faithful coordinate block whenever the specialized determinant is nonzero.
\end{theorem}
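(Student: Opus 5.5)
The argument is the standard Fischer splitting from an invertible composite. Throughout, write $\partial$ for $\partial^{M,Y,\mathrm R}_{x,q}$ and localize everything by $D:=D^Y_{M,k}$.

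First I check that degrees match. By the definition of the $x$-degree, $L_x$ raises degree by one: $xy_I=x\wedge y_I+\tfrac12\sum(\text{contraction terms in } s_i)$, and $x(x\wedge y_I)=r\,y_I-\dots$. From \eqref{eq:super-finite-y}--\eqref{eq:super-scalar-operator}, $\partial$ lowers degree by one. Each Jackson difference lowers the $r$- or $s_i$-degree, and the compensating factor $x$ or $y_i$ has degree one or zero accordingly, while $(x\wedge y_I)\mapsto y_I$ lowers degree by one. Hence $\cK^Y_{M,k}$ is an endomorphism of $H^Y_{M,k}$. This is a finite free $\mathbb T^M_Y$-module, since in fixed degree only finitely many monomials $r^as^\beta$ and blades occur and the $c_{ij}$ carry degree zero, so its determinant makes sense.

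By the adjugate formula, $\cK^Y_{M,k}$ becomes invertible on $(H^Y_{M,k})_D$ once $D$ is a unit. Localization is central and flat, and by Theorem~\ref{thm:base-change} the operators $\partial$ and $L_x$ commute with it.

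Next set $\Pi:=I-L_x\cK^{-1}\partial$ on $(H^Y_{M,k+1})_D$, together with the complementary operator $P:=L_x\cK^{-1}\partial$.
\begin{itemize}
\item $P$ is idempotent: $P^2=L_x\cK^{-1}(\partial L_x)\cK^{-1}\partial=P$.
\item $\partial\Pi=\partial-\cK\cK^{-1}\partial=0$, so $\im\Pi\subseteq\Ker\partial$.
\item $\im P\subseteq L_x(H^Y_{M,k})_D$.
\item Conversely, $P$ fixes $L_x(H^Y_{M,k})_D$, because $P(L_xG)=L_x\cK^{-1}\cK G=L_xG$.
\item $\Pi$ fixes $\Ker\partial$.
\end{itemize}
Therefore every $F$ splits as $F=\Pi F+PF$. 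For uniqueness, suppose $L_xG\in\Ker\partial$. Then $\cK G=0$, so $G=0$ and the intersection is trivial. Here injectivity of $\cK$ suffices; Lemma~\ref{lem:Lx-injective-super} additionally shows $L_x$ is injective, so the second summand is isomorphic to $(H^Y_{M,k})_D$. This proves \eqref{eq:finite-fischer-super} and \eqref{eq:monogenic-projection-super}.

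For the coordinate statement, in the faithful range of Proposition~\ref{prop:faithful-super-block} the representation $\pi_{m|2n,T}$ is injective. It intertwines $L_x$ with multiplication by the supervector and $\partial$ with the conjugated operator \eqref{eq:coordinate-conjugation}. Conjugating $\Pi$ and $P$ transports the decomposition, and the degree grading is preserved because the radial scalars map to the corresponding coordinate scalars.

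I expect the only delicate step to be the first one: that $H^Y_{M,k}$ is finite free over $\mathbb T^M_Y$ and stable under $\partial L_x$. I would verify it monomial by monomial using the relative decomposition \eqref{eq:relative-decomposition}. Once invertibility holds, the rest is formal.
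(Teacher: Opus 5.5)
Your proposal is correct and is essentially the paper's own argument. You split $F=\bigl(F-L_x(\cK^Y_{M,k})^{-1}\partial F\bigr)+L_x(\cK^Y_{M,k})^{-1}\partial F$, and you use $\partial L_x=\cK^Y_{M,k}$ twice: once to see that the first piece lies in $\Ker\partial$, and once to see that $L_xG\in\Ker\partial$ forces $\cK^Y_{M,k}G=0$, hence $G=0$; the coordinate case then follows by conjugation, exactly as in the paper. Your extra checks are details the paper leaves implicit: the $x$-degree shifts of $L_x$ and $\partial$ (by $+1$ and $-1$), which make $\cK^Y_{M,k}$ an endomorphism of the finite free module $H^Y_{M,k}$, invertibility via the adjugate, and idempotence of the complementary projector.
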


\begin{proof}
For $F\in H^Y_{M,k+1}$ set
\[
 U=(\cK^Y_{M,k})^{-1}\partial^{M,Y,\mathrm R}_{x,q}F,
 \qquad
 F_0=F-L_xU.
\]
Then $\partial^{M,Y,\mathrm R}_{x,q}F_0=0$.  If $L_xU$ also belongs to the kernel, then $\cK^Y_{M,k}U=0$, so $U=0$.  This proves existence, directness, and \eqref{eq:monogenic-projection-super}.  The coordinate assertion follows by conjugation.
\end{proof}

\begin{theorem}[Compatibility of monogenic projections]\label{thm:projection-compatibility}
Let $Y\subset Z$ and suppose that $D^Y_{M,k}$ and $D^Z_{M,k}$ are nonzero.  After localizing by both determinants,
\[
 (\cK^Z_{M,k})^{-1}\bigm|_{H^Y_{M,k}}
 =(\cK^Y_{M,k})^{-1},
 \qquad
 \Pi^Z_{M,k+1}\bigm|_{H^Y_{M,k+1}}
 =\Pi^Y_{M,k+1}.
\]
Consequently, if all finite determinants are nonzero at a fixed $M$, the finite decompositions induce the direct-limit decomposition
\begin{equation}\label{eq:global-fischer-super}
 \cR_M(S)_{\Omega^{\mathrm{mon}}_{M;x}}
 =\Ker\partial^{M,\mathrm R}_{x,q}
   \oplus L_x\cR_M(S)_{\Omega^{\mathrm{mon}}_{M;x}},
\end{equation}
where $\Omega^{\mathrm{mon}}_{M;x}$ is generated by the specialized finite determinants.
\end{theorem}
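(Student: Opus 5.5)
The plan is to reduce everything to invariance of the smaller block $H^Y_{M,k}$ under $\cK^Z_{M,k}$, then to cancellation of an injective operator.

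\textbf{Step 1: the smaller block is invariant and the operators agree there.} I first check that $H^Y_{M,k}\subseteq H^Z_{M,k}$. The $x$-degree is defined by the same rules in both blocks, so this holds. Next, $L_x$ is multiplication by $x$, so it maps $H^Y_{M,k}$ into $H^Y_{M,k+1}$. By Theorem~\ref{thm:direct-limit}, specialized through Theorem~\ref{thm:base-change}, $\partial^{M,Z,\mathrm R}_{x,q}$ restricts to $\partial^{M,Y,\mathrm R}_{x,q}$ on $\cR_M(\{x\}\cup Y)$. Hence
\[
 \cK^Z_{M,k}\bigm|_{H^Y_{M,k}}=\cK^Y_{M,k},
\]
and this operator maps $H^Y_{M,k}$ into itself. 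Localizing by both determinants preserves all of this, since the determinants are central.

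\textbf{Step 2: the inverses agree.} Take $G\in H^Y_{M,k}$ and set
\[
 U=(\cK^Y_{M,k})^{-1}G\in (H^Y_{M,k})_{D^Y_{M,k}D^Z_{M,k}}.
\]
By Step 1, $\cK^Z_{M,k}U=\cK^Y_{M,k}U=G$. Since $\cK^Z_{M,k}$ is invertible on the localized $Z$-block, applying its inverse gives $(\cK^Z_{M,k})^{-1}G=U$.

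A subtlety: localizing the $Y$-block by $D^Z_{M,k}$, which may involve the $c_{ij}$ with indices in $Z\setminus Y$, enlarges the coefficient ring. This is harmless. The localized $Y$-block is a free module over the localized ring, the inclusion remains injective because $\mathbb T^M_Z$ is a domain and localization is flat, and the argument above is pure linear algebra over it.

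\textbf{Step 3: the projections agree.} Substituting into \eqref{eq:monogenic-projection-super}, each factor $L_x$, $\partial^{M,Z,\mathrm R}_{x,q}$ and $(\cK^Z_{M,k})^{-1}$ restricts to its $Y$-counterpart by Steps 1 and 2. Therefore $\Pi^Z_{M,k+1}$ restricts to $\Pi^Y_{M,k+1}$.

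\textbf{Step 4: the direct limit.} Localize $\cR_M(S)$ by $\Omega^{\mathrm{mon}}_{M;x}$. Each element lies in a finite block $\{x\}\cup Y$ and decomposes there into finitely many $x$-degree components. On each component $H^Y_{M,k+1}$ apply $\Pi^Y_{M,k+1}$; degree $0$ is entirely monogenic if $L_x$ raises degree and the degree-$0$ part is killed appropriately. By Step 3 the result does not depend on $Y$, so the projections define a global $\Pi$. Its image lies in $\Ker\partial^{M,\mathrm R}_{x,q}$, and $I-\Pi$ has image in $L_x\cR_M(S)_{\Omega^{\mathrm{mon}}_{M;x}}$.

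For directness, suppose $xU\in\Ker\partial^{M,\mathrm R}_{x,q}$. Choose a finite $Y$ containing the support of $U$. Theorem~\ref{thm:finite-fischer-super} in that block, using the degreewise injectivity of $\cK^Y_{M,k}$, forces $U=0$. Lemma~\ref{lem:Lx-injective-super} ensures that the $L_x$-summand is well defined.

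\textbf{Main obstacle.} The delicate step is the bookkeeping of localizations: $D^Z_{M,k}$ lives in a bigger ring than the $Y$-block. I would handle this by base change of the $Y$-block to $\mathbb T^M_Z$ localized by both determinants. The $\mathcal K^Y$ matrix is unchanged under this base change, so its determinant is unchanged, and the agreement argument of Step 2 goes through verbatim.
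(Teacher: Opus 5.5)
Your proposal is correct and follows the paper's proof. The shared steps are: invariance of $H^Y_{M,k}$ under $\cK^Z_{M,k}$, with restriction equal to $\cK^Y_{M,k}$ by direct-limit compatibility; uniqueness of the inverse; substitution into \eqref{eq:monogenic-projection-super}; and finite support and finite $x$-degree for the direct limit.

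Your extra bookkeeping fills in details the paper leaves implicit: the base change of the $Y$-block to the ring localized at $D^Z_{M,k}$, and the degreewise directness argument. Your hedged degree-$0$ remark can simply be asserted, since $\partial^{M,\mathrm R}_{x,q}$ lowers $x$-degree and therefore kills $H^Y_{M,0}$.
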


\begin{proof}
The submodule $H^Y_{M,k}$ is invariant under $\cK^Z_{M,k}$, and the restriction is $\cK^Y_{M,k}$ by finite-support compatibility.  Both restrictions are invertible after localization; uniqueness of the inverse gives the first equality.  Formula \eqref{eq:monogenic-projection-super} then gives the second.  Every element has finite support and finite $x$-degree, so the compatible finite projections define the direct limit.
\end{proof}

\begin{remark}
At a fixed coordinate dimension, Proposition~\ref{prop:faithful-super-block} gives actual coordinate versions for finite supports satisfying $|Y|+1\le m$.  Equation \eqref{eq:global-fischer-super} is a universal formal radial statement; it is not asserted as a coordinate direct limit beyond the faithful or kernel-invariant range.
\end{remark}

\section{Integral-superdimension resonance}\label{sec:resonance}

The regular form \eqref{eq:regular-formal-super-algebra} has coefficients in $\mathbb A_q$.  For $q_0\in(0,1)$, let
\[
 \operatorname{ev}_{q_0}:\mathbb A_q\longrightarrow\R,
 \qquad q\longmapsto q_0.
\]
Define
\begin{equation}\label{eq:evaluated-radial-algebra}
 \cR_{M,q_0}(S)
 :=\R\otimes_{\operatorname{ev}_{q_0},\mathbb A_q}
   \cR_M^{\mathbb A}(S).
\end{equation}
The regular specialized derivative preserves $\cR_M^{\mathbb A}(S)$, so base change defines $\partial^{M,Y,\mathrm R}_{x,q_0}$ on \eqref{eq:evaluated-radial-algebra}.  We write $H^Y_{M,k}(q_0)$ for the evaluated homogeneous module.  For a finite auxiliary set $Y$, put
\[
 \mathbb T_Y(q_0):=\R[c_{ij}:1\le i\le j\le |Y|].
\]

\subsection{Complete one-vector kernel and cokernel}

For $Y=\varnothing$, the evaluated one-vector formulas are
\begin{equation}\label{eq:one-vector-evaluated}
 \partial^{M,\mathrm R}_{x,q_0}(x^{2a})
 =\qnumat{2a}{q_0}x^{2a-1},
 \qquad
 \partial^{M,\mathrm R}_{x,q_0}(x^{2a+1})
 =\qnumat{M+2a}{q_0}x^{2a}.
\end{equation}
For an integer $r$ and $0<q_0<1$, one has $\qnumat r{q_0}=0$ if and only if $r=0$.

\begin{theorem}[Exact one-vector defect]\label{thm:one-vector-exact}
Let $M\in\mathbb Z$ and $0<q_0<1$.

\begin{enumerate}[label=\textup{(\roman*)}]
\item If $M\notin-2\N_0$, then
\[
 \Ker\partial^{M,\mathrm R}_{x,q_0}=\R\,1,
 \qquad
 \im\partial^{M,\mathrm R}_{x,q_0}=\cR_{M,q_0}(\{x\}).
\]
Equivalently, there is an exact sequence
\[
 0\longrightarrow\R\,1
 \longrightarrow\cR_{M,q_0}(\{x\})
 \xrightarrow{\,\partial^{M,\mathrm R}_{x,q_0}\,}
 \cR_{M,q_0}(\{x\})
 \longrightarrow0.
\]

\item If $M=-2\ell$ with $\ell\in\N_0$, then
\[
 \Ker\partial^{M,\mathrm R}_{x,q_0}
 =\R\,1\oplus\R\,x^{2\ell+1},
\]
\[
 \im\partial^{M,\mathrm R}_{x,q_0}
 =\operatorname{span}_{\R}
   \{x^j:j\ge0,\ j\ne2\ell\},
 \qquad
 \operatorname{coker}\partial^{M,\mathrm R}_{x,q_0}
 \cong\R\,x^{2\ell}.
\]
Thus
\[
 0\longrightarrow
 \R\,1\oplus\R\,x^{2\ell+1}
 \longrightarrow\cR_{M,q_0}(\{x\})
 \xrightarrow{\,\partial^{M,\mathrm R}_{x,q_0}\,}
 \cR_{M,q_0}(\{x\})
 \longrightarrow\R\,x^{2\ell}
 \longrightarrow0
\]
is exact.
\end{enumerate}
In both cases the algebraic index
$\dim\Ker\partial^{M,\mathrm R}_{x,q_0}
-\dim\operatorname{coker}\partial^{M,\mathrm R}_{x,q_0}$
is equal to $1$.
\end{theorem}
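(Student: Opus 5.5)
The plan is to use that $\cR_{M,q_0}(\{x\})$ has the real basis $\{x^j:j\ge0\}$. In one vector, $R(\{x\})$ is the polynomial algebra $\R[x]$, since the relation \eqref{eq:radial-relation} is automatic. Base change along $\sigma_M$ and then $\operatorname{ev}_{q_0}$ therefore gives the free $\R$-module with basis $\{x^j\}$. By \eqref{eq:one-vector-evaluated}, the operator $\partial^{M,\mathrm R}_{x,q_0}$ sends each basis vector $x^j$ to a scalar multiple $\lambda_j x^{j-1}$ of the preceding basis vector, where $\lambda_0=0$, $\lambda_{2a}=\qnumat{2a}{q_0}$ for $a\ge1$, and $\lambda_{2a+1}=\qnumat{M+2a}{q_0}$. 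Because the operator is a weighted shift on a basis, both its kernel and its image decompose monomially:
\[
\Ker=\operatorname{span}_\R\{x^j:\lambda_j=0\},\qquad \im=\operatorname{span}_\R\{x^{j-1}:j\ge1,\ \lambda_j\neq0\}.
\]
The justification is that distinct $x^j$ map to distinct basis lines, so a finite linear combination is killed exactly when each coefficient with $\lambda_j\ne0$ vanishes.

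Next I determine which $\lambda_j$ vanish, using the stated fact that $\qnumat r{q_0}=0$ if and only if $r=0$ (for $r\ne0$, $q_0^r\ne1$ because $0<q_0<1$). Then $\lambda_0=0$ always, and $\lambda_{2a}\ne0$ for $a\ge1$. For odd indices, $\lambda_{2a+1}=0$ exactly when $M+2a=0$ with $a\ge0$, that is, when $M=-2a\in-2\N_0$; this happens for the single index $a=\ell$ with $M=-2\ell$.

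The two cases then follow directly. In case (i), $\Ker=\R1$, and every $x^{j-1}$ with $j\ge1$ lies in the image, so the map is surjective. In case (ii), $\Ker=\R1\oplus\R x^{2\ell+1}$. The only $x^{j-1}$ missing from the image is the one with $j=2\ell+1$, namely $x^{2\ell}$; the image is therefore the stated span, and the cokernel is spanned by the class of $x^{2\ell}$. The exact sequences are a restatement of these kernel and cokernel computations, and the index is $1-0=1$ in case (i) and $2-1=1$ in case (ii).

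The only step needing care, and the main obstacle, is the first one: confirming that the evaluated module really has $\{x^j\}$ as an $\R$-basis after base change along $\sigma_M$ and $\operatorname{ev}_{q_0}$. This holds because $\cR_M^{\mathbb A}(\{x\})\cong\mathbb A_q\otimes_\R R(\{x\})$ is free over $\mathbb A_q$ and tensor products preserve free bases. Apart from that, we only need the edge case $\ell=0$ (that is, $M=0$) to be included in the general argument, which it is: there $x^1$ lies in the kernel and $x^0=1$ drops out of the image.
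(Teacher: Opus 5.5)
Your proposal is correct and follows essentially the same route as the paper: you use the monomial basis $\{x^j\}$, note from the evaluated one-vector formulas that the derivative is a weighted shift whose only vanishing weights occur at $x^0$ and at $x^{2a+1}$ with $M+2a=0$, and read off kernel, image, cokernel and index degree by degree. Your extra check that the base-changed module is still free on $\{x^j\}$ makes explicit what the paper's proof takes for granted.
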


\begin{proof}
The monomials $1,x,x^2,\ldots$ form a basis of the one-vector radial algebra.  By \eqref{eq:one-vector-evaluated}, every even positive power maps to the preceding odd power with nonzero coefficient.  The odd power $x^{2a+1}$ maps to $x^{2a}$ with a zero coefficient precisely when $M+2a=0$.  Such an $a\in\N_0$ exists precisely for $M=-2\ell$, and then it is unique, namely $a=\ell$.  The kernel, image, and cokernel statements follow degree by degree.  The index calculation is immediate.
\end{proof}

\begin{corollary}[One-vector Fischer alternative]\label{cor:one-vector-fischer-alternative}
Under the assumptions of Theorem~\ref{thm:one-vector-exact},
\[
 \cR_{M,q_0}(\{x\})
 =\Ker\partial^{M,\mathrm R}_{x,q_0}+x\cR_{M,q_0}(\{x\}).
\]
The sum is direct if $M\notin-2\N_0$.  If $M=-2\ell$, then
\[
 \Ker\partial^{M,\mathrm R}_{x,q_0}
 \cap x\cR_{M,q_0}(\{x\})
 =\R\,x^{2\ell+1}.
\]
If $M=m-2n$ and $m\ge1$, the same statements hold in the represented one-vector coordinate block, and $x^{2\ell+1}$ is a nonzero radial superpolynomial.
\end{corollary}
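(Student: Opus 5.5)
The plan is to work entirely in the monomial basis $1,x,x^2,\ldots$ of $\cR_{M,q_0}(\{x\})$ and read everything off Theorem~\ref{thm:one-vector-exact}. Left multiplication sends $x^j$ to $x^{j+1}$, so $x\cR_{M,q_0}(\{x\})=\operatorname{span}_{\R}\{x^j:j\ge1\}$. This subspace is a complement to $\R\,1$, and both the sum and intersection statements become coordinate statements about which monomials occur.

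For the sum, note that $1$ lies in the kernel in both cases of Theorem~\ref{thm:one-vector-exact}. Hence $\Ker\partial^{M,\mathrm R}_{x,q_0}+x\cR_{M,q_0}(\{x\})$ contains $1$ and all $x^j$ with $j\ge1$, so it is the whole algebra.

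For the intersection:
\begin{itemize}
\item If $M\notin-2\N_0$, the kernel is $\R\,1$, which meets $\operatorname{span}\{x^j:j\ge1\}$ only in $0$. So the sum is direct.
\item If $M=-2\ell$, the kernel is $\R\,1\oplus\R\,x^{2\ell+1}$. An element $\alpha+\beta x^{2\ell+1}$ lies in $x\cR$ if and only if $\alpha=0$, so the intersection is exactly $\R\,x^{2\ell+1}$. This is nonzero because $2\ell+1\ge1$.
\end{itemize}
Lemma~\ref{lem:Lx-injective-super} is not even needed here, since the basis description is explicit.

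For the coordinate statement, take $T=\{x\}$. Then $|T|=1\le m$, so Proposition~\ref{prop:faithful-super-block} makes $\pi_{m|2n,\{x\}}$ injective. The coordinate derivative of Definition~\ref{def:coordinate-super-derivative} is conjugate to the formal one, and $\pi$ intertwines left multiplication by $x$ with left multiplication by the supervector $x$, because $\pi$ is an algebra homomorphism.

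One point needs care: the paper defines the coordinate realization over $\R(q)$, whereas here we work at a numerical $q_0$. I would handle this by evaluating the regular $\mathbb A_q$-form. The representation is defined over $\R$ with scalars extended, so its evaluation at $q_0$ is just $\pi_{m|2n,\{x\}}$ tensored with $\R$ and remains injective. Sums, intersections and kernels therefore transport through the isomorphism onto the image.

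Finally, $x^{2\ell+1}$ is a nonzero radial superpolynomial by injectivity. Concretely, it equals $x^{2\ell}x=r^\ell x$ with $r=\sum x_j^2-\sum(\text{symplectic terms})$, and its bosonic restriction $(\sum_j x_j^2)^\ell\sum_j x_je_j$ is nonzero for $m\ge1$.

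The only mildly delicate step is this passage from the $\R(q)$-level faithful realization to the evaluated one at $q_0$; everything else is bookkeeping in the monomial basis.
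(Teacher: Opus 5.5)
Your proposal is correct and is essentially the paper's proof: you identify $x\cR_{M,q_0}(\{x\})$ with the span of the positive powers of $x$, read off the sum and the intersection from the kernel description in Theorem~\ref{thm:one-vector-exact}, and transport to coordinates via Proposition~\ref{prop:faithful-super-block} with $|T|=1$. Your remark that the faithful realization survives evaluation at $q_0$ (since $\cR_{M,q_0}(\{x\})$ is just $R(\{x\})$ over $\R$) fills in a step the paper leaves implicit; the sign of the bosonic part of $r$ in your nonvanishing check depends on the convention for $e_j^2$, which does not affect the conclusion.
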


\begin{proof}
The ideal $x\cR_{M,q_0}(\{x\})$ is spanned by all positive powers of $x$.  The assertions follow from the kernel description in Theorem~\ref{thm:one-vector-exact}.  The coordinate statement follows from Proposition~\ref{prop:faithful-super-block} with $|T|=1$.
\end{proof}

The homogeneous one-vector determinant is therefore
\begin{equation}\label{eq:one-vector-determinant}
 D^{\varnothing}_{M,k}
 =\begin{cases}
   \qnum{M+k},&k\text{ even},\\
   \qnum{k+1},&k\text{ odd}.
  \end{cases}
\end{equation}
Theorem~\ref{thm:one-vector-exact} identifies the full defect behind the vanishing of the even factor, rather than only detecting singularity of one homogeneous block.

\subsection{Exterior-blade Green resonance}

\begin{proposition}[Exterior-blade Green resonance]\label{prop:green-resonance}
Let $M\in\N_0$.  If a finite block contains an exterior blade $y_I$ with $|I|=M$, then
\[
 \mu^M_{M,0,0}(q)=0.
\]
Moreover, $x\wedge y_I$ lies in the kernel of $\cH^Y_{M;x,q}$ on this degree-zero sector.  Hence no inverse of the exterior anticommutator exists on the full block.
\end{proposition}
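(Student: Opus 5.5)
The plan is to compute directly from the specialized finite formulas of Proposition~\ref{prop:specialized-finite}, using only that $\qnum{0}=0$ and that every Jackson difference annihilates constants.

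\emph{Step 1: the eigenvalue vanishes.} In \eqref{eq:mu-super} put $p=M$, $a=0$ and $\beta=0$. The bracket becomes $\qnum{0}+q^0\sum_i\qnum{0}=0$. The leading term is $\qnum{M-M}=\qnum0=0$. Hence $\mu^M_{M,0,0}(q)=0$ identically in $q$. This is also the $(M,0,0)$ factor of $\Omega^Y_x$ specialized by $\sigma_M$, because the formal factor $\qnum{d-M}+(-1)^Mq^{d-M}\cdot0$ is sent to $\qnum0$. So $0\in\sigma_M(\Omega^Y_x)$, and the block is not Green-nonresonant at $M$.

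\emph{Step 2: an explicit kernel vector.} Take $B=1$ in \eqref{eq:super-finite-x}. Since $\delta^{(r)}_{q^2}1=0$ and $\delta^{(s_i)}_q1=0$, we have $\partial^{\mathrm{sc},M,Y}_{x,q}(1)=0$. Therefore
\[
\partial^{M,Y,\mathrm R}_{x,q}(x\wedge y_I)=\qnum{M-|I|}\,y_I=\qnum0\,y_I=0 .
\]
By \eqref{eq:creation}, $\cC_x^Y(x\wedge y_I)=0$. Substituting into \eqref{eq:H-super} gives
\[
\cH^Y_{M;x,q}(x\wedge y_I)=\partial^{M,Y,\mathrm R}_{x,q}(0)+\cC^Y_x(0)=0 .
\]
The same computation applied to $y_I$ gives $\cH^Y_{M;x,q}(y_I)=\partial^{M,Y,\mathrm R}_{x,q}(x\wedge y_I)+\cC^Y_x\bigl(y_I\partial^{\mathrm{sc},M,Y}_{x,q}(1)\bigr)=0$. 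This is consistent with both diagonal blocks of \eqref{eq:H-super-block} acting by $\mu^M_{M,0,0}=0$ on the degree-zero support-$M$ blades.

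\emph{Step 3: nontriviality and conclusion.} By the relative exterior decomposition \eqref{eq:relative-decomposition}, which persists after base change, $x\wedge y_I$ is a basis element of a free summand. In particular it is nonzero in $\cR_M(\{x\}\cup Y)$. Thus $\cH^Y_{M;x,q}$ has a nonzero kernel on the degree-zero sector, and it cannot be invertible on the full block.

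Any central localization of the coefficients still leaves $x\wedge y_I$ a nonzero element of a free module. Hence no localization avoiding $0$ can produce an inverse either. The only point requiring care is exactly this nonvanishing of $x\wedge y_I$ after specialization and localization; it follows from freeness, as in Lemma~\ref{lem:Lx-injective-super}.
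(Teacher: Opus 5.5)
Your proof is correct and follows the paper's argument. You substitute $p=M$, $a=0$, $\beta=0$ into \eqref{eq:mu-super} to get $\qnum{0}=0$, and then use \eqref{eq:super-finite-x} with $B=1$ together with $\cC_x^Y(x\wedge y_I)=0$ to conclude that $\cH^Y_{M;x,q}(x\wedge y_I)=0$. Your extra remarks, that $y_I$ is also annihilated and that $x\wedge y_I$ stays nonzero after specialization and localization by freeness, make explicit points the paper leaves implicit.
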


\begin{proof}
Put $p=M$, $a=0$, and $\beta=0$ in \eqref{eq:mu-super}.  The result is $\qnum{M-p}=\qnum0=0$.  Moreover, \eqref{eq:super-finite-x} gives $\partial^{M,Y,\mathrm R}_{x,q}(x\wedge y_I)=0$, while $\cC_x^Y(x\wedge y_I)=0$.
\end{proof}

\subsection{Degree-zero support spectrum}

Let $Y=\{y_1,\ldots,y_N\}$.  For a basis monomial
$y_Ir^as^\beta$ or $(x\wedge y_I)r^as^\beta$, put
\begin{equation}\label{eq:xsupp-super}
 \operatorname{supp}_x(I,\beta)
 :=I\cup\{i:\beta_i>0\}.
\end{equation}
Let $\mathcal F^J_{M,k}(Y)$ be the $\mathbb T^M_Y$-span of the degree-$k$
basis monomials whose $x$-support is contained in $J$.

\begin{proposition}[Specialized support filtration]\label{prop:support-filtration-super}
The operator $\cK^Y_{M,k}$ preserves every $\mathcal F^J_{M,k}(Y)$.  On the
exact-support quotient
\[
 \operatorname{gr}^JH^Y_{M,k}
 :=\mathcal F^J_{M,k}(Y)
   \Big/\sum_{J'\subsetneq J}\mathcal F^{J'}_{M,k}(Y),
\]
it is obtained by coefficient extension from the corresponding operator for
the smaller auxiliary set $\{y_j:j\in J\}$.
\end{proposition}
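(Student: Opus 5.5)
We compute $\cK^Y_{M,k}=\partial^{M,Y,\mathrm R}_{x,q}L_x$ on each basis monomial, using the specialized finite formulas of Proposition~\ref{prop:specialized-finite}, and track which indices can enter the $x$-support. The coefficients $c_{ij}$ are central, have $x$-degree zero and are untouched by the Jackson operators. It therefore suffices to work on monomials $y_Ir^as^\beta$ and $(x\wedge y_I)r^as^\beta$ with coefficients in $\mathbb T^M_Y$.

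\textbf{Step 1: left multiplication by $x$.} Write $xy_I=x\wedge y_I+x\lrcorner y_I$. The contraction $x\lrcorner y_I$ is an alternating sum of terms $\tfrac12 s_{i}\,y_{I\setminus\{i\}}$ with $i\in I$. Such a term lowers $I$ by $i$ but raises $\beta_i$ by one, so its $x$-support is contained in $I\cup\{i:\beta_i>0\}$.

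Similarly, $x(x\wedge y_I)=r\,y_I-x\wedge(x\lrcorner y_I)$ up to sign conventions. Here the first term keeps the support, and the second replaces $y_I$ by $s_i\,x\wedge y_{I\setminus\{i\}}$, again with support inside the original one.

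\textbf{Step 2: the derivative.} Apply \eqref{eq:super-finite-y}--\eqref{eq:super-finite-x}.

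The coefficient term $\qnum{M-|I|}y_IB$ does not change $I$ or $\beta$.

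In the scalar operator \eqref{eq:super-scalar-operator}, the part $(1+q)x\delta^{(r)}_{q^2}$ lowers $a$ and multiplies by $x$. Re-expanding $x y_I$, or $(x\wedge y_I)x$, in the relative basis only produces support-preserving or support-shrinking terms, as in Step 1.

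The part $y_i\delta^{(s_i)}_q$ is nonzero only when $\beta_i>0$, so $i$ already lies in the support. It lowers $\beta_i$, which may remove $i$ from the support, and multiplies on the right by $y_i$.

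The product $y_Iy_i$, or $(x\wedge y_I)y_i$, expands into $y_{I\cup\{i\}}$ or $y_{I\setminus\{i\}}$ with $c$-coefficients, together with an $s_i$-term from $x\lrcorner$ in the $x$-exterior case. Every resulting index lies in $I\cup\{i\}$, which is inside the original support.

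This proves invariance of each $\mathcal F^J_{M,k}(Y)$. Degree preservation is already built into the definition of $\cK^Y_{M,k}$.

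\textbf{Step 3: the graded quotient.} Take a monomial with exact support $J$. Modulo lower supports, only the terms whose support is still exactly $J$ survive. All of these involve only the indices $j\in J$, the scalars $r$, $s_j$ $(j\in J)$, and coefficients $c_{ij}$.

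On the other hand, the finite operator for the subset $Y_J=\{y_j:j\in J\}$ is given by the same formulas, with the sum in \eqref{eq:super-scalar-operator} restricted to $J$. The terms $y_i\delta^{(s_i)}$ with $i\notin J$ vanish on these monomials because $\beta_i=0$. This is the direct-limit compatibility of Theorem~\ref{thm:direct-limit}.

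Hence the basis of exact-support-$J$ monomials for $Y$ is identified with the exact-support basis for $Y_J$. The induced operator on $\operatorname{gr}^JH^Y_{M,k}$ is the $Y_J$-operator tensored from $\mathbb T^M_{Y_J}$ up to $\mathbb T^M_Y$.

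\textbf{Main obstacle.} The hard part is the careful sign and contraction bookkeeping in Step 2 for products like $(x\wedge y_I)y_i$ in the $x$-exterior sector. One must check that no term ever produces an index outside $I\cup\{i\}$, and that the coefficients $c_{ij}$ created there have $i,j\in J$.

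I would handle this by writing $y_i=y_i$ and using the radial identity $u\wedge v$ plus $\tfrac12\{u,v\}$ inductively on $|I|$. The only new scalars that arise are $c_{ij}$ and $s_i$ with indices from the blade, so they lie in $\mathbb T^M_Y$ or are absorbed into the $s$-exponents.
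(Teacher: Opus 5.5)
Your proposal is correct and follows essentially the same route as the paper. You show that $L_x$ (contraction moves an index from the blade into an $s_i$-exponent) and the derivative (a Jackson difference in $s_i$ trades one $s_i$ for a right factor $y_i$) never enlarge the $x$-support, and that on the exact-support quotient the variables outside $J$ are passive, so the $Y$- and $Y_J$-operators agree by direct-limit compatibility; your write-up is also more careful than the paper's in checking that re-expanding products such as $y_Ix$ and $(x\wedge y_I)y_i$ introduces only indices from $I\cup\{i\}$ and coefficients $c_{ij}$ with $i,j\in J$.
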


\begin{proof}
Left multiplication by $x$ does not increase support: contraction removes an
exterior index and inserts the corresponding scalar $s_i$.  The derivative also
does not increase support, because a Jackson difference in $s_i$ removes one
occurrence of $s_i$ and inserts $y_i$.  Thus $\cK^Y_{M,k}$ preserves the support
filtration.  Passing to an exact-support quotient removes every strict support
drop, and variables outside $J$ remain passive central coefficients.
\end{proof}

The degree-zero block is better than triangular: all apparent lower-support
terms cancel.

\begin{lemma}[Alternating blade identity]\label{lem:alternating-blade-identity}
Let $J=\{i_1<\cdots<i_p\}$ with $p\ge1$.  Then
\begin{equation}\label{eq:alternating-blade-identity}
 \sum_{\nu=1}^p(-1)^{\nu-1}
 y_{J\setminus\{i_\nu\}}y_{i_\nu}
 =(-1)^{p-1}p\,y_J.
\end{equation}
\end{lemma}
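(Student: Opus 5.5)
The plan is to expand each product $y_{J\setminus\{i_\nu\}}y_{i_\nu}$ into its wedge part and its contraction part, and then to show that the contraction parts cancel in pairs under the alternating sum. The tool is the standard right-multiplication rule for a blade by a vector in the radial algebra. For $K=\{k_1<\cdots<k_r\}$ and a vector $v$,
\[
 y_Kv=y_K\wedge v+\sum_{\mu=1}^r(-1)^{r-\mu}\tfrac12\{y_{k_\mu},v\}\,y_{K\setminus\{k_\mu\}}.
\]
I would justify this directly from \eqref{eq:radial-relation}. Write $y_K$ as the antisymmetrized product, which in a radial algebra agrees with the Clifford-type wedge. Move $v$ leftward using $y_kv=-vy_k+\{y_k,v\}$; each anticommutator is central, so it can be pulled out. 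Alternatively, induct on $r$ using $y_K=y_{K'}\wedge y_{k_r}=y_{K'}y_{k_r}-(\text{contraction})$. This rule is the only input from the radial structure, together with uniqueness of the exterior expansion (cited in Section~\ref{sec:radial-q}).

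\textbf{Wedge part.} Take $K=J\setminus\{i_\nu\}$ and $v=y_{i_\nu}$, so $r=p-1$. Moving $y_{i_\nu}$ from the last slot to position $\nu$ takes $p-\nu$ transpositions, so
\[
 y_{J\setminus\{i_\nu\}}\wedge y_{i_\nu}=(-1)^{p-\nu}y_J.
\]
Multiplying by the weight $(-1)^{\nu-1}$ gives $(-1)^{p-1}y_J$ for every $\nu$. Summing over $\nu$ therefore produces exactly $(-1)^{p-1}p\,y_J$.

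\textbf{Contraction part.} This is the step needing care, namely the sign bookkeeping. The contraction terms are multiples of $\tfrac12 c_{i_\mu i_\nu}\,y_{J\setminus\{i_\mu,i_\nu\}}$ with $\mu\ne\nu$. The coefficient is symmetric in $(\mu,\nu)$ and the lower blade is the same for both orders, so it suffices to show that the ordered pairs $(\mu,\nu)$ and $(\nu,\mu)$ carry opposite signs.

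Assume $\mu<\nu$. When $i_\nu$ is removed and $i_\mu$ is contracted, $i_\mu$ sits at position $\mu$ in $K$, so the total sign is $(-1)^{\nu-1}(-1)^{p-1-\mu}$. When $i_\mu$ is removed and $i_\nu$ is contracted, $i_\nu$ sits at position $\nu-1$, so the total sign is $(-1)^{\mu-1}(-1)^{p-1-(\nu-1)}$. The two exponents differ by $2\nu-2\mu-1$, which is odd, so the two terms cancel. Hence every lower-support contribution vanishes and \eqref{eq:alternating-blade-identity} follows.

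The main obstacle is fixing the sign convention of the contraction formula consistently; the position shift for $\mu>\nu$ versus $\mu<\nu$ is exactly what produces the odd difference. I would double-check the convention on the case $p=2$, where the identity reads $y_2y_1-y_1y_2=-2\,y_1\wedge y_2$ and holds because $y_1y_2=y_1\wedge y_2+\tfrac12c_{12}$.
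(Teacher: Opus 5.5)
Your proof is correct and follows the paper's argument. Like the paper, you split each product into its exterior part, which contributes $(-1)^{p-1}y_J$, and its contractions, and you cancel the contraction for each pair $\{i_\mu,i_\nu\}$ using the symmetry of $\{y_{i_\mu},y_{i_\nu}\}$ and an odd sign difference. Your explicit position count ($\mu$ versus $\nu-1$) spells out the sign check that the paper states only in one line.
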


\begin{proof}
Expand each product on the left into its exterior part and its contractions.
The exterior part of the $\nu$-th summand is
$(-1)^{\nu-1}(-1)^{p-\nu}y_J=(-1)^{p-1}y_J$, so the exterior contributions
sum to the right-hand side.  A contraction involving a fixed pair
$i_a,i_b$ occurs twice, once when $i_a$ is removed and once when $i_b$ is
removed.  The two signs differ by one, while the scalar contraction is
symmetric in $i_a,i_b$; hence the pair cancels.  All contraction terms
therefore vanish.
\end{proof}

\begin{theorem}[Exact degree-zero blade spectrum]\label{thm:exact-blade-spectrum}
For every $J\subseteq\{1,\ldots,N\}$, with $p=|J|$,
\begin{equation}\label{eq:exact-blade-action}
 \cK^Y_{M,0}(y_J)=F_{M,p}(q)y_J,
\end{equation}
where
\begin{equation}\label{eq:support-factor-Mp}
 F_{M,0}(q)=\qnum M,
 \qquad
 F_{M,p}(q)=\qnum{M-p}+(-1)^{p-1}p\quad(p\ge1).
\end{equation}
Consequently, in the blade basis of $H^Y_{M,0}$ the matrix of
$\cK^Y_{M,0}$ is diagonal over $\mathbb T^M_Y$.
\end{theorem}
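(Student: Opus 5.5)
The plan is to compute $\cK^Y_{M,0}(y_J)=\partial^{M,Y,\mathrm R}_{x,q}(x\,y_J)$ directly. We first expand $x\,y_J$ in the $x$-relative decomposition \eqref{eq:relative-decomposition}, then apply the specialized finite formulas of Proposition~\ref{prop:specialized-finite}, and finally reassemble the result with Lemma~\ref{lem:alternating-blade-identity}.

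\emph{Step 1: expand $x\,y_J$.} Write $J=\{i_1<\cdots<i_p\}$. Left multiplication by a vector splits into an exterior part and a contraction, which gives
\[
 x\,y_J=(x\wedge y_J)+\sum_{\nu=1}^p(-1)^{\nu-1}\tfrac{s_{i_\nu}}{2}\,y_{J\setminus\{i_\nu\}} .
\]
Here the contraction of $x$ with $y_{i_\nu}$ is $\tfrac12\{x,y_{i_\nu}\}=s_{i_\nu}/2$, and the sign comes from moving $x$ past the first $\nu-1$ factors. For $p=0$ only $x\cdot1=x\wedge y_\varnothing$ survives. This is the same contraction rule already used in the proof of Theorem~\ref{thm:triangular-super}.

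\emph{Step 2: differentiate each piece.} For the exterior part, \eqref{eq:super-finite-x} with $B=1$ gives $\qnum{M-p}y_J$, because $\partial^{\mathrm{sc},M,Y}_{x,q}(1)=0$. For each contraction term, \eqref{eq:super-finite-y} gives
\[
 y_{J\setminus\{i_\nu\}}\,\partial^{\mathrm{sc},M,Y}_{x,q}\bigl(s_{i_\nu}/2\bigr).
\]
The $r$-difference kills this scalar, $T_{q^2}^{(r)}$ acts trivially on it, and $\delta^{(s_i)}_q s_{i_\nu}=\delta_{i,i_\nu}$. The scalar derivative therefore equals $2\cdot\tfrac12\,y_{i_\nu}=y_{i_\nu}$.

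\emph{Step 3: reassemble.} Summing gives
\[
 \cK^Y_{M,0}(y_J)=\qnum{M-p}\,y_J+\sum_\nu(-1)^{\nu-1}y_{J\setminus\{i_\nu\}}y_{i_\nu}.
\]
Lemma~\ref{lem:alternating-blade-identity} turns the sum into $(-1)^{p-1}p\,y_J$, which yields \eqref{eq:support-factor-Mp}. The case $p=0$ reduces to $\qnum M$, in agreement with \eqref{eq:one-vector-super}.

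\emph{Diagonality.} The degree-zero module $H^Y_{M,0}$ is free over $\mathbb T^M_Y$ on the blades $y_J$. Degree-$0$ monomials cannot contain $r$, any $s_i$, or $x\wedge y_I$. The eigenvalues $F_{M,p}(q)$ lie in $\R(q)$ and are central. So \eqref{eq:exact-blade-action} gives a diagonal matrix.

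The main obstacle I expect is Step 1, specifically the sign convention and the factor $\tfrac12$ in the contraction $x\cdot y_J$. Together these must produce exactly the coefficient $+1$ on $y_{J\setminus\{i_\nu\}}y_{i_\nu}$ with sign $(-1)^{\nu-1}$, so that the lemma applies verbatim. I would check this against the contraction formula $\operatorname{pr}_{0,x}$ in Theorem~\ref{thm:triangular-super} and against the case $p=1$. In that case $x y=x\wedge y+s/2$ maps to $\qnum{M-1}y+y$, which matches $F_{M,1}=\qnum{M-1}+1$.
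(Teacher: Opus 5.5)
Your proposal is correct and follows the paper's proof essentially step for step. You expand $L_xy_J$ as $x\wedge y_J+\sum_\nu(-1)^{\nu-1}\tfrac{s_{i_\nu}}{2}y_{J\setminus\{i_\nu\}}$, differentiate with the specialized finite formulas to get $\qnum{M-p}y_J+\sum_\nu(-1)^{\nu-1}y_{J\setminus\{i_\nu\}}y_{i_\nu}$, collapse the sum with Lemma~\ref{lem:alternating-blade-identity}, and treat $p=0$ by the one-vector formula, exactly as the paper does. Your remark that $H^Y_{M,0}$ is free on the blades $y_J$, because degree zero excludes $r$, the $s_i$, and $x\wedge y_I$, is the justification the paper leaves implicit for the diagonality conclusion.
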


\begin{proof}
For $p\ge1$, left multiplication gives
\[
 L_xy_J
 =x\wedge y_J
  +\sum_{\nu=1}^p(-1)^{\nu-1}
       \frac{s_{i_\nu}}2y_{J\setminus\{i_\nu\}}.
\]
The derivative of the first term is $\qnum{M-p}y_J$.  Since the scalar
coefficients are central and
$\partial^{\mathrm{sc},M,Y}_{x,q}(s_i/2)=y_i$, the remaining terms contribute
\[
 \sum_{\nu=1}^p(-1)^{\nu-1}
 y_{J\setminus\{i_\nu\}}y_{i_\nu}.
\]
Lemma~\ref{lem:alternating-blade-identity} gives
$(-1)^{p-1}p\,y_J$.  The empty-support formula is the one-vector evaluation
$\cK^Y_{M,0}(1)=\qnum M$.
\end{proof}

\begin{corollary}[Degree-zero determinant]\label{cor:degree-zero-determinant}
For $N=|Y|$,
\begin{equation}\label{eq:degree-zero-determinant}
 D^Y_{M,0}=\prod_{p=0}^{N}F_{M,p}(q)^{\binom Np}.
\end{equation}
Thus the complete degree-zero determinant is independent of the internal contraction variables $c_{ij}$.
\end{corollary}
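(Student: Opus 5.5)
The plan is to read the determinant off the diagonal form supplied by Theorem~\ref{thm:exact-blade-spectrum}; the only real work is identifying a basis of $H^Y_{M,0}$.

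\emph{Basis of the degree-zero module.} With the $x$-degree of Section~\ref{sec:decompositions}, $r$, every $s_i$ and every $x\wedge y_I$ have positive degree, while the $c_{ij}$ and the blades $y_I$ have degree zero. By the relative exterior decomposition \eqref{eq:relative-decomposition}, $H^Y_{M,0}$ is therefore the free $\mathbb T^M_Y$-module with basis $\{y_J : J\subseteq\{1,\ldots,N\}\}$. This basis has $2^N$ elements, and exactly $\binom Np$ of them have support size $p$.

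\emph{Diagonal determinant.} Theorem~\ref{thm:exact-blade-spectrum} states that $\cK^Y_{M,0}y_J=F_{M,p}(q)\,y_J$ with $p=|J|$. The eigenvalue $F_{M,p}(q)$ is a scalar in $\R(q)$, so it is central. This holds with no correction terms: the alternating blade identity (Lemma~\ref{lem:alternating-blade-identity}) cancels every contraction, including those carrying $c_{ij}$. Consequently the matrix of $\cK^Y_{M,0}$ in the blade basis is diagonal with entries in $\R(q)\subset\mathbb T^M_Y$. Its determinant is the product of the diagonal entries, and grouping the factors by $p=|J|$ gives $\prod_{p=0}^N F_{M,p}(q)^{\binom Np}$. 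The $p=0$ factor is $\qnum M$, coming from $\cK^Y_{M,0}(1)=x\cdot x$ contributions, i.e.\ the value $F_{M,0}$ recorded in \eqref{eq:support-factor-Mp}.

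\emph{Independence of $c_{ij}$.} Each factor $F_{M,p}(q)$ depends only on $q$ and $M$, so $D^Y_{M,0}\in\R(q)$ contains no internal contraction variable.

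The one point to check carefully is that the determinant in \eqref{eq:K-D-super} is well defined. It is taken over the commutative ring $\mathbb T^M_Y$, acting on the free module, so it is independent of the choice of basis, and the diagonal computation above is therefore legitimate. Beyond this check, and the correct choice of basis in the first step, I expect no real obstacle.
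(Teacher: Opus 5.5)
Your proposal is correct and is essentially the paper's argument: the blades $y_J$ form a $\mathbb T^M_Y$-basis of $H^Y_{M,0}$, Theorem~\ref{thm:exact-blade-spectrum} makes $\cK^Y_{M,0}$ diagonal in that basis, and grouping the $\binom Np$ blades of each support rank gives the stated product, which contains no $c_{ij}$. One small slip: the $p=0$ entry comes from $\cK^Y_{M,0}(1)=\partial^{M,Y,\mathrm R}_{x,q}(x)=\qnum M$, i.e.\ \eqref{eq:super-finite-x} with $I=\varnothing$ and $B=1$, and not from any $x\cdot x$ term.
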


\begin{proof}
There are $\binom Np$ exterior blades of support rank $p$.  Multiply the diagonal entries in Theorem~\ref{thm:exact-blade-spectrum}.
\end{proof}

\begin{example}[Low-support audit]\label{ex:low-support-audit}
For distinct indices, Theorem~\ref{thm:exact-blade-spectrum} gives
\[
\begin{array}{c|c}
 p&\cK^Y_{M,0}(y_{i_1}\wedge\cdots\wedge y_{i_p})\\ \hline
 1&(\qnum{M-1}+1)y_{i_1}\\
 2&(\qnum{M-2}-2)(y_{i_1}\wedge y_{i_2})\\
 3&(\qnum{M-3}+3)(y_{i_1}\wedge y_{i_2}\wedge y_{i_3})\\
 4&(\qnum{M-4}-4)(y_{i_1}\wedge y_{i_2}\wedge y_{i_3}\wedge y_{i_4}).
\end{array}
\]
These are the first four low-support instances of
\eqref{eq:exact-blade-action}.  In ranks $2,3,4$, every coefficient involving a central contraction variable $c_{ij}$ cancels pairwise before any specialization.
\end{example}

\begin{lemma}[Integral $q$-number ranges]\label{lem:q-number-ranges}
Let $0<q<1$.
\begin{enumerate}[label=\textup{(\roman*)}]
\item If $r\ge2$, then $\qnum r=1+q+\cdots+q^{r-1}$ is strictly increasing
from $1$ to $r$ as $q$ runs from $0^+$ to $1^-$.
\item If $r=-s<0$, then
\[
 \qnum r=-\sum_{j=1}^sq^{-j}
\]
is strictly increasing from $-\infty$ to $-s$.
\end{enumerate}
\end{lemma}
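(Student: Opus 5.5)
Both parts follow from the definition $\qnum r=(1-q^r)/(1-q)$ in \eqref{eq:integer-q-number}: rewrite it as a finite sum of powers of $q$, then read off monotonicity and the limits at $0^+$ and $1^-$.

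\textbf{Part (i).} For $r\ge2$, the geometric-sum identity $1-q^r=(1-q)(1+q+\cdots+q^{r-1})$ gives $\qnum r=\sum_{j=0}^{r-1}q^j$ for $q\ne1$.
\begin{itemize}
\item[] The summand $q^0=1$ is constant, and each $q^j$ with $1\le j\le r-1$ is strictly increasing on $(0,1)$. Since $r\ge2$, at least one such term occurs, so the sum is strictly increasing.
\item[] As $q\to0^+$, every term with $j\ge1$ tends to $0$, so $\qnum r\to1$.
\item[] As $q\to1^-$, each of the $r$ terms tends to $1$, so $\qnum r\to r$.
\end{itemize}
Strict monotonicity together with these limits shows that the range on $(0,1)$ is the open interval $(1,r)$.

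\textbf{Part (ii).} Take $r=-s$ with $s\ge1$. Then
\[
\qnum{-s}=\frac{1-q^{-s}}{1-q}=-q^{-s}\,\frac{1-q^{s}}{1-q}=-q^{-s}\qnum s=-\sum_{j=0}^{s-1}q^{j-s}=-\sum_{j=1}^{s}q^{-j}.
\]
\begin{itemize}
\item[] Each $q^{-j}$ with $j\ge1$ is strictly decreasing on $(0,1)$, so its negative is strictly increasing, and hence so is the sum.
\item[] As $q\to0^+$, the term $-q^{-1}$ tends to $-\infty$ while no term tends to $+\infty$, so $\qnum{-s}\to-\infty$.
\item[] As $q\to1^-$, each of the $s$ terms $-q^{-j}$ tends to $-1$, so $\qnum{-s}\to-s$.
\end{itemize}
The range on $(0,1)$ is therefore $(-\infty,-s)$.

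There is no real obstacle here. The only step needing care is the algebraic rewriting for negative $r$, which must be valid in $\mathbb A_q$. It is: $q^{-1}\in\mathbb A_q$, and multiplying by $1-q$ recovers the identity $1-q^{-s}=(1-q)\cdot\bigl(-\sum_{j=1}^{s}q^{-j}\bigr)$. This identity can be checked by telescoping, since $(1-q)\,q^{-j}=q^{-j}-q^{-j+1}$ and summing over $j=1,\dots,s$ gives $q^{-s}-1$.
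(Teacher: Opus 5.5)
Your proof is correct and follows essentially the same route as the paper: write $\qnum r$ as the displayed finite sum of powers of $q$, deduce strict monotonicity, and read off the endpoint limits. The only difference is that the paper checks monotonicity by differentiating the sums (derivatives $\sum_{j=1}^{r-1}jq^{j-1}$ and $\sum_{j=1}^{s}jq^{-j-1}$), whereas you argue term by term, which is immaterial.
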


\begin{proof}
Differentiate the displayed finite sums.  In the positive case the derivative
is $\sum_{j=1}^{r-1}jq^{j-1}>0$; in the negative case it is
$\sum_{j=1}^s j q^{-j-1}>0$.  The endpoint limits are immediate.
\end{proof}

\begin{theorem}[Complete degree-zero support-resonance classification]\label{thm:support-root-classification}
Let $M\in\mathbb Z$ and $p\ge1$.  The factor $F_{M,p}$ in
\eqref{eq:support-factor-Mp} has a zero in $q\in(0,1)$ precisely in one of
the following mutually exclusive cases:
\begin{equation}\label{eq:support-root-regimes}
 \begin{aligned}
 &p\text{ is even} &&\text{and}&& M>2p,\\
 &p\text{ is odd}  &&\text{and}&& 0<M<p.
 \end{aligned}
\end{equation}
In either case the zero, denoted $q_{M,p}$, is unique and simple.
\end{theorem}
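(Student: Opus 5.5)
The argument reduces everything to Lemma~\ref{lem:q-number-ranges}. Write $r=M-p$ and $F_{M,p}(q)=\qnum r+c_p$, where $c_p=(-1)^{p-1}p$; thus $c_p=-p$ for even $p$ and $c_p=p$ for odd $p$. The function $q\mapsto \qnum r$ is continuous on $(0,1)$. By Lemma~\ref{lem:q-number-ranges}, the cases $r\ge2$ and $r\le-1$ are strictly increasing, and the remaining cases are constants: $\qnum0=0$ and $\qnum1=1$. So a zero of $F_{M,p}$ in $(0,1)$ amounts to $-c_p$ lying in the range of $\qnum r$ over $(0,1)$. In the strictly monotone cases such a zero is automatically unique, and I will check simplicity through the derivative. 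I will work case by case in $r$.

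First the constant cases. For $r=0$ we get $F=c_p\ne0$, since $p\ge1$. For $r=1$ we get $F=1+c_p$, which vanishes only if $c_p=-1$. That is impossible: $c_p=-1$ would need $p$ even with $p=1$. So no root arises in either constant case.

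Next, $r\ge2$. Here the range of $\qnum r$ is the open interval $(1,r)$, so we need $1<-c_p<r$. For odd $p$, $-c_p=-p<0$, and there is no root. For even $p$, the condition reads $1<p<M-p$, i.e.\ $M>2p$; note $p\ge2$ makes $p>1$ automatic. Then $r=M-p>p\ge2$, which is consistent with this case.

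Then $r=-s\le-1$. Here the range is $(-\infty,-s)$, so we need $-c_p<-s$, i.e.\ $c_p>s=p-M$. For even $p$, $c_p=-p<0<s$, and there is no root. For odd $p$ the condition is $p>p-M$, i.e.\ $M>0$, and $r<0$ means $M<p$; together these give $0<M<p$.

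Uniqueness follows from strict monotonicity. For simplicity, the derivative of $F_{M,p}$ equals that of $\qnum r$, which by the proof of Lemma~\ref{lem:q-number-ranges} is a sum of strictly positive terms and so is nonzero at the root. Finally, the two regimes are mutually exclusive because they impose opposite parity on $p$.

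The main care point is bookkeeping the open endpoints: the ranges are open intervals, so boundary cases such as $M=2p$ (where $-c_p=p=r$ sits at the endpoint $q\to1$) and $M=p$ (where $r=0$) must be excluded. Beyond that the argument is routine.
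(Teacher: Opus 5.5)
Your proof is correct and follows essentially the paper's argument: you solve $\qnum{M-p}=(-1)^{p}p$ by comparing with the ranges in Lemma~\ref{lem:q-number-ranges}, and you get uniqueness from strict monotonicity and simplicity from the positive derivative. The only difference is bookkeeping: you split by the sign of $r=M-p$ rather than by the parity of $p$, and you treat the constant cases $r=0,1$ explicitly, which the paper leaves implicit.
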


\begin{proof}
If $p$ is even, the equation is $\qnum{M-p}=p$.  Its right-hand side is
positive, so $M-p>0$.  By Lemma~\ref{lem:q-number-ranges}, a root in
$(0,1)$ exists exactly when $1<p<M-p$, which is equivalent to $M>2p$;
uniqueness and simplicity follow from strict monotonicity and the nonzero
derivative.

If $p$ is odd, the equation is $\qnum{M-p}=-p$.  This requires $M-p<0$.
Writing $M-p=-s$, Lemma~\ref{lem:q-number-ranges} shows that the range is
$(-\infty,-s)$.  Hence $-p$ lies in the range exactly when
$-p<-s=M-p$, equivalently $M>0$; together with $M-p<0$, this is
$0<M<p$.  Again the root is unique and simple.
\end{proof}

\begin{example}[First even and odd support roots]\label{ex:first-support-roots}
For $(M,p)=(5,2)$,
\[
 F_{5,2}(q)=\qnum3-2=q^2+q-1,
\]
so the unique root is $q_{5,2}=(\sqrt5-1)/2$.  The first odd-support example
is $(M,p)=(1,3)$:
\[
 F_{1,3}(q)=\qnum{-2}+3=-q^{-1}-q^{-2}+3,
\]
whose root is $q_{1,3}=(1+\sqrt{13})/6$.
\end{example}

\begin{corollary}[Support-dependent failure of the Fischer sum]\label{cor:support-fischer-failure}
Assume one of the regimes in \eqref{eq:support-root-regimes}, let
$q=q_{M,p}$, and let the radial algebra contain at least $p$ auxiliary
vectors.  For every $p$-element support $J$,
\[
 L_xy_J\ne0,
 \qquad
 \partial^{M,Y,\mathrm R}_{x,q_{M,p}}(L_xy_J)=0.
\]
Hence the unlocalized degree-one right-monogenic Fischer sum is not direct.
\end{corollary}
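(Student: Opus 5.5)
The plan is to combine Theorem~\ref{thm:exact-blade-spectrum}, Theorem~\ref{thm:support-root-classification} and Lemma~\ref{lem:Lx-injective-super}, all passed through the evaluation $\operatorname{ev}_{q_{M,p}}$. First I would check that evaluation is legitimate. The blade $y_J$ lies in the degree-zero module $H^Y_{M,0}$, and the regular form $\cR_M^{\mathbb A}$ carries the specialized derivative. Every coefficient in the computation of $\cK^Y_{M,0}(y_J)$ lies in $\mathbb A_q$: these are $\qnum{M-p}$, the integer $(-1)^{p-1}p$, and the contraction scalars $s_i/2$. Hence the identity $\cK^Y_{M,0}(y_J)=F_{M,p}(q)y_J$ of Theorem~\ref{thm:exact-blade-spectrum} holds already in $\cR_M^{\mathbb A}(\{x\}\cup Y)$, before any passage to $\R(q)$. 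Base change along $\operatorname{ev}_{q_0}$ then gives $\partial^{M,Y,\mathrm R}_{x,q_0}(L_xy_J)=F_{M,p}(q_0)y_J$ in $\cR_{M,q_0}$.

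I then substitute $q_0=q_{M,p}$. By Theorem~\ref{thm:support-root-classification}, in either regime of \eqref{eq:support-root-regimes} we have $F_{M,p}(q_{M,p})=0$. Since $\cK=\partial L_x$, this gives $\partial^{M,Y,\mathrm R}_{x,q_{M,p}}(L_xy_J)=0$. The hypothesis that the algebra contains at least $p$ auxiliary vectors is exactly what guarantees that a $p$-element support $J$, and so the blade $y_J\neq0$, exists.

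For nonvanishing I would argue in one of two ways. The direct route uses the explicit formula $L_xy_J=x\wedge y_J+\sum_\nu(-1)^{\nu-1}(s_{i_\nu}/2)y_{J\setminus\{i_\nu\}}$. Its component $x\wedge y_J$ in the $x$-exterior sector of the relative decomposition \eqref{eq:relative-decomposition} is a basis element with coefficient $1$, and this survives evaluation because the decomposition is a free-module decomposition compatible with base change. The alternative is simply to invoke Lemma~\ref{lem:Lx-injective-super}, which explicitly covers evaluation at $q_0\in(0,1)$.

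Finally, $L_xy_J$ is a nonzero element of degree one that lies in both $\Ker\partial$ and $L_x H^Y_{M,0}(q_{M,p})$. The intersection of the two summands in the unlocalized sum $H^Y_{M,1}=\Ker\partial+L_xH^Y_{M,0}$ is therefore nonzero, and the sum is not direct. The only delicate point I anticipate is the first step, namely confirming that the eigenvalue identity is genuinely regular, with no denominators outside $\mathbb A_q$, so that it evaluates. I would settle this by noting that the Jackson differences applied to $s_i/2$ yield polynomial outputs.
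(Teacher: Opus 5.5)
Your proposal is correct and follows the paper's proof. Evaluating the diagonal formula of Theorem~\ref{thm:exact-blade-spectrum} at the root gives $\partial^{M,Y,\mathrm R}_{x,q_{M,p}}(L_xy_J)=\cK^Y_{M,0}(q_{M,p})\,y_J=F_{M,p}(q_{M,p})\,y_J=0$, and Lemma~\ref{lem:Lx-injective-super} gives $L_xy_J\ne0$; your check that the eigenvalue identity is regular over $\mathbb A_q$ usefully spells out the evaluation step the paper leaves tacit.

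One wording fix: do not write $H^Y_{M,1}(q_{M,p})=\Ker\partial+L_xH^Y_{M,0}(q_{M,p})$ as an equality. At the root this sum is in fact a proper submodule: for $M\ne0$ the map $\partial:H^Y_{M,1}\to H^Y_{M,0}$ is onto, so a rank count shows the sum has corank equal to the rank of $\Ker\cK^Y_{M,0}(q_{M,p})$. Non-directness needs only the nonzero intersection you exhibit.
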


\begin{proof}
Theorem~\ref{thm:exact-blade-spectrum} gives
$\cK^Y_{M,0}(q_{M,p})y_J=0$.  Since
$\partial L_x=\cK^Y_{M,0}$ and $L_x$ is injective by
Lemma~\ref{lem:Lx-injective-super}, the two displayed assertions follow.
\end{proof}

\subsection{Exact multiplicity and the bounded odd-support interaction}

For $q_0\in(0,1)$ and $0\le r\le N$, set
\[
 \mathfrak R_{M,N}(q_0)
 :=\{r:F_{M,r}(q_0)=0\}.
\]

\begin{theorem}[Exact degree-zero resonance multiplicity]\label{thm:exact-support-multiplicity}
For every $M\in\mathbb Z$, $q_0\in(0,1)$, and $N=|Y|$,
\begin{equation}\label{eq:exact-support-kernel}
 \Ker\cK^Y_{M,0}(q_0)
 =\bigoplus_{r\in\mathfrak R_{M,N}(q_0)}
   \ \bigoplus_{|J|=r}\mathbb T_Y(q_0)y_J.
\end{equation}
It is a free $\mathbb T_Y(q_0)$-module of rank
\begin{equation}\label{eq:exact-support-rank}
 \sum_{r\in\mathfrak R_{M,N}(q_0)}\binom Nr.
\end{equation}
Moreover,
\begin{equation}\label{eq:exact-singular-intersection}
 \Ker\partial^{M,Y,\mathrm R}_{x,q_0}\cap L_xH^Y_{M,0}(q_0)
 =\bigoplus_{r\in\mathfrak R_{M,N}(q_0)}
   \ \bigoplus_{|J|=r}\mathbb T_Y(q_0)L_xy_J.
\end{equation}
If $N+1\le m$ and $M=m-2n$, all displayed generators remain nonzero in
the standard coordinate superspace realization.
\end{theorem}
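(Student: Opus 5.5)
The argument specializes the exact diagonal action of Theorem~\ref{thm:exact-blade-spectrum} at $q=q_0$. First I will check that this is legitimate. The identity $\cK^Y_{M,0}(y_J)=F_{M,p}(q)y_J$ holds on the regular form $\cR_M^{\mathbb A}$, because $F_{M,p}(q)\in\mathbb A_q$: negative $q$-numbers are Laurent polynomials in $q$, and positive ones are polynomials. Base change along $\operatorname{ev}_{q_0}$ therefore gives $\cK^Y_{M,0}(q_0)y_J=F_{M,p}(q_0)y_J$ in $H^Y_{M,0}(q_0)$. By the relative exterior decomposition \eqref{eq:relative-decomposition}, $H^Y_{M,0}(q_0)$ is the free $\mathbb T_Y(q_0)$-module with basis the blades $y_J$, $J\subseteq\{1,\ldots,N\}$. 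Indeed, degree zero excludes $r$, $s_i$ and every $x\wedge y_I$, which leaves the coefficient ring $\mathbb T_Y(q_0)$.

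For the kernel, take $F=\sum_J y_JT_J$ with $T_J\in\mathbb T_Y(q_0)$. Then $\cK^Y_{M,0}(q_0)F=\sum_J y_J\,F_{M,|J|}(q_0)T_J$. By uniqueness of the blade expansion, this vanishes exactly when $F_{M,|J|}(q_0)T_J=0$ for every $J$. Since $F_{M,|J|}(q_0)$ is a real number and $\mathbb T_Y(q_0)$ is a domain over $\R$, this means $T_J=0$ whenever $|J|\notin\mathfrak R_{M,N}(q_0)$. That gives \eqref{eq:exact-support-kernel}. Freeness and the rank \eqref{eq:exact-support-rank} follow because the kernel is spanned by a subset of a basis, and there are $\binom Nr$ blades of rank $r$.

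For \eqref{eq:exact-singular-intersection}, I note that $L_x$ maps $H^Y_{M,0}(q_0)$ into $H^Y_{M,1}(q_0)$. An element $L_xU$ lies in $\Ker\partial^{M,Y,\mathrm R}_{x,q_0}$ iff $\cK^Y_{M,0}(q_0)U=0$, so the intersection equals $L_x\Ker\cK^Y_{M,0}(q_0)$. Because $L_x$ is central-linear and injective by Lemma~\ref{lem:Lx-injective-super}, it carries the free decomposition of the kernel isomorphically onto $\bigoplus_r\bigoplus_{|J|=r}\mathbb T_Y(q_0)L_xy_J$, and directness is preserved.

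The coordinate claim is where I expect the only real care to be needed. With $|T|=N+1\le m$, Proposition~\ref{prop:faithful-super-block} gives injectivity of $\pi_{m|2n,T}$ over $\R$, and hence over any field extension. What must be checked is that faithfulness survives the evaluation $\operatorname{ev}_{q_0}$. I will argue that $\cR_{M,q_0}(T)\cong\R\otimes_{\R}R(T)=R(T)$ canonically, since $\mathbb A_q\otimes_\R R(T)$ evaluated at $q_0$ is just $R(T)$. So the evaluated block is the original real radial algebra, on which $\pi$ is injective. The nonzero generators $y_J$ and $L_xy_J$, which are nonzero by Lemma~\ref{lem:Lx-injective-super}, therefore have nonzero images.
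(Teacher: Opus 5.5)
Your proposal is correct and follows the paper's own route. The exact diagonal action of Theorem~\ref{thm:exact-blade-spectrum} on the blade basis gives the kernel coefficientwise, since every nonzero diagonal factor is a nonzero real constant. The intersection comes from $\partial L_x=\cK^Y_{M,0}$ together with injectivity of $L_x$, and the coordinate claim from Proposition~\ref{prop:faithful-super-block} on the block $\{x\}\cup Y$. Your additional checks that $F_{M,p}(q)\in\mathbb A_q$, so the formula descends along $\operatorname{ev}_{q_0}$, and that the evaluated block is canonically $R(T)$, make explicit what the paper's proof leaves implicit.
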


\begin{proof}
The exact diagonal formula \eqref{eq:exact-blade-action} proves
\eqref{eq:exact-support-kernel} and \eqref{eq:exact-support-rank} directly
over the polynomial coefficient ring $\mathbb T_Y(q_0)$; no passage to its
fraction field is needed.  Every nonzero diagonal factor is a nonzero real
constant and hence a unit of $\mathbb T_Y(q_0)$.  Equation
\eqref{eq:exact-singular-intersection} follows from
$\partial L_x=\cK^Y_{M,0}$ and the injectivity of $L_x$.  The coordinate
statement follows from Proposition~\ref{prop:faithful-super-block}, applied
to the block $\{x\}\cup Y$, whose cardinality is $N+1$.
\end{proof}

\begin{corollary}[Pure multiplicity at even-support roots]\label{cor:even-support-rank-drop}
Let $p\ge2$ be even, $M>2p$, $q_0=q_{M,p}$, and $N\ge p$.  On the truncated
module
\[
 \mathcal F^{\le p}_{M,0}(Y;q_0)
 :=\operatorname{span}_{\mathbb T_Y(q_0)}
   \{y_J:|J|\le p\},
\]
the kernel of $\cK^Y_{M,0}(q_0)$ is
\[
 \bigoplus_{|J|=p}\mathbb T_Y(q_0)y_J
\]
and has free rank $\binom Np$.  Thus the canonical singular vectors are
exactly $U_J=y_J$; no lower-support correction occurs.
\end{corollary}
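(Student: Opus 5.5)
The plan is to reduce everything to the exact diagonal action of Theorem~\ref{thm:exact-blade-spectrum}, restricted to the truncated module. Since $\cK^Y_{M,0}(y_J)=F_{M,|J|}(q_0)\,y_J$ with $\mathbb T_Y(q_0)$-linearity (the factors are real constants), the truncation $\mathcal F^{\le p}_{M,0}(Y;q_0)$ is invariant. On this free module with basis $\{y_J:|J|\le p\}$, the operator is diagonal. Its kernel is therefore the direct sum of the lines $\mathbb T_Y(q_0)y_J$ for which $F_{M,|J|}(q_0)=0$. Each nonzero factor is a nonzero real constant, hence a unit of $\mathbb T_Y(q_0)$, so the coefficient of any $y_J$ with $F_{M,|J|}(q_0)\ne0$ must vanish for a kernel element. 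This is the same argument as in Theorem~\ref{thm:exact-support-multiplicity}, now with $r\le p$.

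The substantive step is to show that $F_{M,r}(q_{M,p})\ne0$ for every $0\le r<p$, where $M>2p$ and $p$ is even.
\begin{itemize}
\item For $r=0$: $F_{M,0}=\qnum M\ne0$, since $M\ne0$.
\item For odd $r<p$: Theorem~\ref{thm:support-root-classification} says $F_{M,r}$ has a root in $(0,1)$ only if $0<M<r$. This is impossible because $M>2p>r$. Alternatively, argue directly: $M-r>0$ gives $\qnum{M-r}>0$, so $F_{M,r}=\qnum{M-r}+r>0$.
\item For even $r$ with $2\le r<p$: we need $\qnum{M-r}\ne r$ at $q_0$. At the root, $\qnum{M-p}(q_0)=p$. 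Since $M-r>M-p\ge2$ and $q_0>0$, we have $\qnum{M-r}(q_0)=\qnum{M-p}(q_0)+\sum_{j=M-p}^{M-r-1}q_0^j>p>r$. Hence $F_{M,r}(q_0)>0$.
\end{itemize}
This monotonicity comparison is the only real point and is the step to check carefully. It uses $q_0>0$ and $M-p\ge1$, both guaranteed by $M>2p$.

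It follows that the only vanishing factor on the truncation is $F_{M,p}(q_0)=0$ (by definition of $q_{M,p}$). The kernel is then exactly $\bigoplus_{|J|=p}\mathbb T_Y(q_0)y_J$, which is free of rank $\binom Np$ since $N\ge p$ and the blades $y_J$ are part of a basis. Because the action is exactly diagonal, with no triangular lower-support terms (Lemma~\ref{lem:alternating-blade-identity}), the singular vectors are the blades $U_J=y_J$ themselves and need no lower-support corrections. Via $\partial L_x=\cK^Y_{M,0}$ and Lemma~\ref{lem:Lx-injective-super}, they correspond to the nonzero monogenic elements $L_xy_J$.
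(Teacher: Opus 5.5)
Your proposal is correct and follows essentially the same route as the paper. You show that every lower factor $F_{M,r}(q_0)$ with $r<p$ is positive (odd $r$ because $M-r>0$, even $r$ by the monotonicity $\qnumat{M-r}{q_0}>\qnumat{M-p}{q_0}=p>r$), and then read off the kernel from the exact diagonal action, exactly as Theorem~\ref{thm:exact-support-multiplicity} does; your separate treatment of $r=0$ is harmless, since the even-$r$ monotonicity argument already covers it.
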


\begin{proof}
For $r<p$, the diagonal factors are nonzero.  If $r$ is odd, then $M-r>0$
and $F_{M,r}(q_0)=\qnumat{M-r}{q_0}+r>0$.  If $r$ is even, monotonicity in
the integer argument gives
\[
 \qnumat{M-r}{q_0}>\qnumat{M-p}{q_0}=p>r,
\]
so again $F_{M,r}(q_0)>0$.  Apply
Theorem~\ref{thm:exact-support-multiplicity}.
\end{proof}

\begin{theorem}[Lower-support interaction at an odd root]\label{thm:odd-support-interaction}
Let $p$ be odd, $0<M<p$, $q_0=q_{M,p}$, and $N=|Y|\ge p$.  Then:
\begin{enumerate}[label=\textup{(\roman*)}]
\item $F_{M,r}(q_0)\ne0$ for every odd $r<p$ and for $r=0$;
\item a lower even resonance can occur only for $0<r<M/2$;
\item among the lower even ranks there is at most one resonance.
\end{enumerate}
If $M\le4$, there is no positive even integer in $(0,M/2)$, so every
odd-support root has pure multiplicity $\binom Np$ on
$\mathcal F^{\le p}_{M,0}(Y;q_0)$.  If $M>4$, the real function
\[
 G_{M,q_0}(t):=\qnumat{M-t}{q_0}-t
\]
is strictly decreasing, satisfies $G_{M,q_0}(0)>0$ and
$G_{M,q_0}(M/2)<0$, and therefore has a unique zero
$\rho_{M,p}\in(0,M/2)$.  A simultaneous lower-support resonance occurs
exactly when $\rho_{M,p}$ is an even integer.  In that case the kernel rank
on $\mathcal F^{\le p}_{M,0}(Y;q_0)$ is
\begin{equation}\label{eq:odd-support-rank}
 \begin{cases}
  \binom Np,&\rho_{M,p}\notin2\N,\\
  \binom Np+\binom N{\rho_{M,p}},&\rho_{M,p}\in2\N.
 \end{cases}
\end{equation}
\end{theorem}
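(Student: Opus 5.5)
The plan is to read everything off the exact diagonal action of Theorem~\ref{thm:exact-blade-spectrum} and the kernel description of Theorem~\ref{thm:exact-support-multiplicity}. Since $\cK^Y_{M,0}(q_0)$ is diagonal in the blade basis, its kernel on $\mathcal F^{\le p}_{M,0}(Y;q_0)$ is spanned by the blades $y_J$ with $|J|\le p$ and $F_{M,|J|}(q_0)=0$. So the whole proof reduces to deciding the signs of the scalars $F_{M,r}(q_0)$ for $r\le p$. The only information about $q_0$ is the root equation
\[
\qnumat{M-p}{q_0}=-p,
\qquad\text{that is,}\qquad
\sum_{j=1}^{s}q_0^{-j}=p,\quad s:=p-M>0,
\]
using Lemma~\ref{lem:q-number-ranges}(ii).

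\textbf{Step 1: part (i).} For $r=0$ we have $F_{M,0}=\qnumat M{q_0}>0$, since $M>0$. Now let $r<p$ be odd, so $F_{M,r}=\qnumat{M-r}{q_0}+r$.
\begin{itemize}
\item If $r\le M$, then $\qnumat{M-r}{q_0}\ge0$, hence $F_{M,r}(q_0)\ge r>0$.
\item If $M<r<p$, put $s'=r-M$, so $0<s'<s$. Using $q_0^{-j}>1$ in the root equation,
\[
\sum_{j=1}^{s'}q_0^{-j}
 =p-\sum_{j=s'+1}^{s}q_0^{-j}
 <p-(s-s')=r.
\]
Therefore $F_{M,r}(q_0)=r-\sum_{j=1}^{s'}q_0^{-j}>0$.
\end{itemize}
This telescoping comparison against the root equation is the one genuinely nontrivial estimate. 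A naive appeal to monotonicity of $\qnumat{k}{q_0}$ in $k$ only gives $F_{M,r}>r-p$, which is too weak.

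\textbf{Step 2: part (ii).} Let $r\ge2$ be even, so $F_{M,r}=\qnumat{M-r}{q_0}-r$.
\begin{itemize}
\item If $r\ge M$, then $\qnumat{M-r}{q_0}\le0<r$.
\item If $M/2\le r<M$, then $1\le M-r\le r$, and $\qnumat{k}{q_0}\le k$ by Lemma~\ref{lem:q-number-ranges}(i). Hence $\qnumat{M-r}{q_0}\le r$. Equality would force $M-r=1=r$, which is impossible for even $r$.
\end{itemize}
So $F_{M,r}(q_0)<0$ in both cases, and an even resonance needs $0<r<M/2$.

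\textbf{Step 3: part (iii), the function $G$, and the rank formula.} Extend $\qnumat{M-t}{q_0}=(1-q_0^{M-t})/(1-q_0)$ to real $t$. Since $q_0^{M-t}$ increases with $t$, the term $\qnumat{M-t}{q_0}$ is decreasing, so $G_{M,q_0}(t)=\qnumat{M-t}{q_0}-t$ is continuous and strictly decreasing.
\begin{itemize}
\item $G_{M,q_0}(0)=\qnumat M{q_0}>0$.
\item $G_{M,q_0}(M/2)=\qnumat{M/2}{q_0}-M/2<0$ once $M/2>1$, which covers $M>4$. For odd $M$ we use the real-exponent version of the bound $(1-q^x)/(1-q)<x$ for $x>1$, which follows from convexity of $q^x$.
\end{itemize}
The intermediate value theorem then gives a unique zero $\rho_{M,p}\in(0,M/2)$.

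By (ii), an even rank $r$ is resonant iff $G_{M,q_0}(r)=0$, i.e. iff $r=\rho_{M,p}$. This yields (iii) and the criterion that a simultaneous lower resonance occurs exactly when $\rho_{M,p}\in2\N$. If $M\le4$, the interval $(0,M/2)\subseteq(0,2]$ contains no positive even integer, so there is no lower resonance.

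Finally, $p$ itself is resonant and $\rho_{M,p}<M/2<p$, so the two ranks are distinct. Theorem~\ref{thm:exact-support-multiplicity}, restricted to the diagonal truncated module, then gives rank $\binom Np$, plus $\binom N{\rho_{M,p}}$ in the resonant case, which is \eqref{eq:odd-support-rank}.
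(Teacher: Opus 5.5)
Your proof is correct and follows the paper's argument essentially step for step. Your telescoping comparison against $\sum_{j=1}^{s}q_0^{-j}=p$ is the paper's monotonicity of $A_v=\sum_{j=1}^{v}(q_0^{-j}-1)$ with $A_s=M$, your convexity bound for $G_{M,q_0}(M/2)<0$ is its strict Bernoulli inequality, and the rank count uses the same diagonal kernel description; the one cosmetic slip is that for $M\le4$ the inclusion excluding an even rank should read $(0,M/2)\subseteq(0,2)$, since $(0,2]$ contains $2$.
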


\begin{proof}
Write $p=M+s$ with $s>0$.  For odd $r\le M$, one has $F_{M,r}(q_0)=\qnumat{M-r}{q_0}+r>0$.  Hence any resonant odd rank $r<p$ would have the form $r=M+u$ with $0<u<s$.  Define
\[
 A_v(q_0):=q_0^{-v}\qnumat v{q_0}-v
 =\sum_{j=1}^v(q_0^{-j}-1).
\]
The odd root equation is $A_s(q_0)=M$, while
$A_{v+1}(q_0)-A_v(q_0)=q_0^{-(v+1)}-1>0$.  Hence
$A_u(q_0)<M$, which is equivalent to $F_{M,r}(q_0)>0$.  Also
$F_{M,0}(q_0)=\qnumat M{q_0}>0$.

For even $r$, the factor is $G_{M,q_0}(r)$.  If $r\ge M/2$, then either
$M-r\le0$, or $0<M-r\le r$; in both cases
$\qnumat{M-r}{q_0}-r<0$.  Hence no lower even resonance is possible when
$M\le4$.  For $M>4$,
\[
 G'_{M,q_0}(t)
 =\frac{(\log q_0)q_0^{M-t}}{1-q_0}-1<0,
\]
so $G_{M,q_0}$ is strictly decreasing.  Moreover,
$G_{M,q_0}(0)=\qnumat M{q_0}>0$, while $M/2>1$.  The strict Bernoulli inequality
$q_0^{\alpha}>1-\alpha(1-q_0)$ for $\alpha>1$ gives $\qnumat{M/2}{q_0}<M/2$.  Thus $G_{M,q_0}(M/2)<0$ and
there is one real zero in $(0,M/2)$.  The rank formula follows from
Theorem~\ref{thm:exact-support-multiplicity}.
\end{proof}

\begin{remark}
The theorem is the endpoint of the elementary odd-support analysis used here.
It reduces every possible coincidence to the single monotone test
$\rho_{M,p}\in2\mathbb N$; we do not assert that this arithmetic coincidence
never occurs.
\end{remark}

\begin{example}[The first singular families]\label{ex:first-singular-family}
At $M=5$ and $q_0=(\sqrt5-1)/2$, the resonant rank is $p=2$, and
\[
 \cK^Y_{5,0}(q_0)(y_i\wedge y_j)=0.
\]
Thus the $\binom N2$ elements $L_x(y_i\wedge y_j)$ are linearly independent
singular right-monogenic vectors.  At the first odd root $(M,p)=(1,3)$,
Theorem~\ref{thm:odd-support-interaction} has no possible lower even rank,
so the $\binom N3$ elements $L_xy_J$, $|J|=3$, form the complete singular
family in the truncated degree-zero block.
\end{example}

\subsection{The two-vector benchmark}

Let $Y=\{y\}$.  For $k\ge0$, define
\[
 \varepsilon^M_k=
 \begin{cases}
  \qnum M+q^M\qnum k,&k\text{ even},\\
  1,&k\text{ odd},
 \end{cases}
 \qquad
 \eta_k=
 \begin{cases}
  \qnum{k+1},&k\text{ odd},\\
  1,&k\text{ even},
 \end{cases}
\]
and
\begin{align}
 A^M_{k,a}&:=\qnum M+q^M\bigl(\qnum{2a}+2q^{2a}\qnum{k-2a}\bigr),
 &&0\le a\le\left\lfloor\frac{k-1}{2}\right\rfloor,\label{eq:A-super}\\
 B^M_{k,a}&:=\qnum{M-1}-q^{M-1}\bigl(\qnum k+q^{2a+2}\qnum{k-1-2a}\bigr),
 &&0\le a\le\left\lfloor\frac{k-2}{2}\right\rfloor,\label{eq:B-super}\\
 T^M_k&:=\qnum{M-1}+\qnum{k+1}-q^{M-1}\qnum k.\label{eq:T-super}
\end{align}
Empty products are $1$.

\begin{proposition}[Two-vector determinant]\label{prop:two-vector-determinant}
The specialized determinant of $\cK^{\{y\}}_{M,k}$ is
\begin{equation}\label{eq:two-vector-determinant}
 D^{\{y\}}_{M,k}
 =(-1)^{\lceil k/2\rceil}\varepsilon^M_k\eta_kT^M_k
 \prod_{a=0}^{\lfloor(k-1)/2\rfloor}\qnum{2a+2}A^M_{k,a}
 \prod_{a=0}^{\lfloor(k-2)/2\rfloor}\qnum{2a+2}B^M_{k,a}.
\end{equation}
Consequently, the two-vector right-monogenic Fischer decomposition is available whenever the factors in \eqref{eq:two-vector-determinant} are nonzero.
\end{proposition}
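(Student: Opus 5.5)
Since the determinant is taken over $\mathbb T^M_{\{y\}}=\R(q)[c_{11}]$, the plan is to compute it over the universal ring $\B$ and specialize by Theorem~\ref{thm:base-change}. The degree-$k$ module $H^{\{y\}}_{M,k}$ has the $\mathbb T$-basis
\[
 r^as^{k-2a},\qquad y\,r^as^{k-2a},\qquad (x\wedge 1)r^as^{k-1-2a},\qquad (x\wedge y)r^as^{k-1-2a}.
\]
The ranges of $a$ come from the degree count. The first step is to write $L_x$ on each basis type, using $x\cdot x=r$, $x\,y=x\wedge y+s/2$ and $x(x\wedge y)=r\,y-\tfrac s2 x$. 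Then apply \eqref{eq:super-finite-y}--\eqref{eq:super-finite-x}, together with the Jackson actions of \eqref{eq:super-scalar-operator} on $r^as^b$. This gives explicit entries of $\cK^{\{y\}}_{M,k}$.

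The second step is to order the basis so that the matrix becomes block triangular. I would use the support filtration of Proposition~\ref{prop:support-filtration-super}, with supports $\varnothing$ and $\{1\}$. The support-$\varnothing$ part consists only of the one-vector monomials $x^k$, in the $s$-free, $y$-free sector. By \eqref{eq:one-vector-determinant} it contributes $\varepsilon^M_k$ for $k$ even and $\eta_k$ for $k$ odd. This is where the factors $\qnum M+q^M\qnum k=\qnum{M+k}$ and $\qnum{k+1}$ come from. On the exact-support quotient $\operatorname{gr}^{\{1\}}$, the remaining matrix should decompose further by the $r$-exponent $a$, since $s$-Jackson differences lower $\beta$ and contractions trade $s$ for $r$.

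For the third step I would pair, for each $a$, the monomials $r^as^{k-2a}$ and $(x\wedge y)r^as^{k-1-2a}$, and separately $y\,r^as^{k-2a}$ and $(x\wedge 1)\cdots$. The aim is to reduce to $2\times2$ or triangular blocks. The diagonal entries of these blocks should be the Green-type eigenvalues \eqref{eq:mu-super} shifted by the $L_x$ contraction terms. This is how $A^M_{k,a}$ arises, with $p=0$, the exponent $2a$ and the doubled $s$-contribution. Likewise $B^M_{k,a}$ arises with $p=1$, a sign $(-1)^p$, and the shift $2a+2$ coming from $x(x\wedge y)=ry-\cdots$.

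The prefactors $\qnum{2a+2}$ should come from the $r$-Jackson difference on the term raised from $r^a$ to $r^{a+1}$ by $L_x$. The extremal top term $T^M_k$ comes from the boundary value of $a$ where one partner monomial is missing. The sign $(-1)^{\lceil k/2\rceil}$ counts the transpositions needed to bring the paired blocks to diagonal form.

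The main obstacle is verifying that the off-diagonal contraction terms (those involving $c_{11}$ and $s$-to-$r$ trades) really cancel or sit strictly on one side of the ordering, so that the determinant is a product of these explicit factors. I would check this first for $k=0,1,2$, where $k=0$ must reproduce Corollary~\ref{cor:degree-zero-determinant}, namely $\qnum M(\qnum{M-1}+1)$; note that $T^M_0=\qnum{M-1}+1$. Only after that would I carry out the general $a$-indexed computation. The final claim about the Fischer decomposition then follows immediately from Theorem~\ref{thm:finite-fischer-super}.
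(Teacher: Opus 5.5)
Your proposal is correct, but it takes a different route from the paper. The paper's proof does no computation. It quotes the determinant over the formal ring $\B$ from \cite{BarseghyanBorySchneiderZhang2026}, applies $\sigma_M$ via Theorem~\ref{thm:base-change}, and the Fischer consequence is then Theorem~\ref{thm:finite-fischer-super}. You keep the same outer frame but redo the formal computation. That makes the statement checkable inside this paper and shows where each factor comes from. The cost is a page of bookkeeping that the paper outsources.

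Your computation does close. Write $e_a=r^as^{k-2a}$, $h_a=(x\wedge y)r^as^{k-1-2a}$, $g_a=xr^as^{k-1-2a}$, $f_a=yr^as^{k-2a}$.
\begin{itemize}
\item $\cK^{\{y\}}_{M,k}$ preserves $y$-degree parity, so it splits into an $\{e,h\}$ system and an $\{f,g\}$ system.
\item The only entries linking different $a$ are the $c_{11}$-terms $h_a\mapsto q^{2a+2}\qnum{k-1-2a}c_{11}e_{a+1}$ and $f_a\mapsto q^{M-1+2a}\qnum{k-2a}c_{11}g_a$.
\item So the obstacle you flag resolves as block triangularity, and the determinant is the product of the diagonal blocks.
\end{itemize}

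Three of your heuristics need correcting when you carry this out.

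(i) The second pairing must be shifted. Since $\cK g_a=\qnum{2a+2}g_a+2q^{2a+2}\qnum{k-1-2a}f_{a+1}$, the $2\times2$ blocks are $(g_a,f_{a+1})$, not $(g_a,f_a)$. This is why $f_0=ys^k$ is isolated with eigenvalue $T^M_k$. For odd $k$ it also isolates $g_{(k-1)/2}=x^k$, with eigenvalue $\eta_k$.

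(ii) $A^M_{k,a}$ and $B^M_{k,a}$ are not diagonal entries; they come from the $2\times2$ determinants. With columns giving images,
\[
\det\begin{pmatrix}\qnum{M+k}&\tfrac12\bigl(\qnum{2a+2}-\qnum{M+k}\bigr)\\ 2q^{M+2a}\qnum{k-2a}&-\qnum{2a+2}-q^{M+2a}\qnum{k-2a}\end{pmatrix}=-\qnum{2a+2}A^M_{k,a},
\]
\[
\det\begin{pmatrix}\qnum{2a+2}&\tfrac12(1+q^{M-1})\qnum{2a+2}\\ 2q^{2a+2}\qnum{k-1-2a}&\phi_{a+1}\end{pmatrix}=\qnum{2a+2}B^M_{k,a}.
\]
Here $\phi_{a'}=\qnum{M-1}-q^{M-1}\qnum k+q^{2a'}\qnum{k-2a'+1}$ is the diagonal entry on $f_{a'}$, and $\phi_0=T^M_k$.

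(iii) The sign cannot come from transpositions, because reordering a basis does not change the determinant of an endomorphism. It is intrinsic. Each of the $\lceil k/2\rceil$ blocks $(e_a,h_a)$ contributes a factor $-1$, coming from the diagonal entry $-(\qnum{2a+2}+q^{M+2a}\qnum{k-2a})$ on $h_a$. The $B$-blocks carry no sign.
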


\begin{proof}
The determinant over the formal coefficient ring is computed in \cite{BarseghyanBorySchneiderZhang2026}.  Apply the specialization $\sigma_M$ and use Theorem~\ref{thm:base-change}.
\end{proof}

\begin{corollary}[Stable positive-superdimension range]\label{cor:stable-range}
Assume $0<q<1$ and $M$ is a positive integer.  If $M\ge2k$, then $D^{\{y\}}_{M,k}\ne0$, so the degree-$(k+1)$ two-vector decomposition holds without determinant localization.
\end{corollary}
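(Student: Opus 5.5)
We plan to show that every factor in the determinant formula of Proposition~\ref{prop:two-vector-determinant} is strictly positive or strictly negative when $0<q<1$, $M\in\N$ and $M\ge2k$. The overall sign $(-1)^{\lceil k/2\rceil}$ does not matter. The factors $\qnum{2a+2}$ and $\eta_k=\qnum{k+1}$ (or $1$) are positive, since $\qnum r>0$ for every $r\ge1$. For even $k$ we have $\varepsilon^M_k=\qnum M+q^M\qnum k>0$, and for odd $k$ it equals $1$. Likewise $A^M_{k,a}>0$ termwise: $\qnum M>0$ because $M\ge1$, and the bracket is a sum of $q$-numbers with nonnegative integer arguments. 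This leaves $B^M_{k,a}$ and $T^M_k$, the only factors carrying a minus sign.

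For these we will use the identity $\qnum{M-1}-q^{M-1}\qnum k=\qnum{M-1-k}\cdot$ (suitably rewritten). The relevant fact is $\qnum{u}+q^{u}\qnum{v}=\qnum{u+v}$, so $\qnum{M-1}-q^{M-1}\qnum{k}$ equals $\qnum{M-1}-(\qnum{M-1+k}-\qnum{M-1})$. It is cleaner to write $\qnum{M-1}=\qnum{k}+q^{k}\qnum{M-1-k}$ (valid since $M-1-k\ge k-1\ge0$ when $k\ge1$). Then
\[
T^M_k=\qnum{k+1}+\qnum k(1-q^{M-1})+q^k\qnum{M-1-k}>0 .
\]
Both $\qnum k\ge0$ and $1-q^{M-1}\ge0$, and $\qnum{k+1}>0$. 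The case $k=0$ is direct: $T^M_0=\qnum{M-1}+1\ge1$.

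For $B^M_{k,a}$ with $k\ge2$ and $0\le a\le\lfloor(k-2)/2\rfloor$ we bound the subtracted bracket. Using $\qnum{k-1-2a}\le k-1-2a$ and $q^{2a+2}<1$, we get $q^{M-1}\bigl(\qnum k+q^{2a+2}\qnum{k-1-2a}\bigr)<q^{M-1}(\qnum k+\qnum{k-1})$. Sharper, we expand $\qnum{M-1}=\qnum{k}+q^k\qnum{k-1}+q^{2k-1}\qnum{M-2k}$, which is valid since $M-1\ge 2k-1$. This gives
\[
B^M_{k,a}\ge \qnum k(1-q^{M-1})+\qnum{k-1}\bigl(q^k-q^{M-1}q^{2a+2}\bigr)+q^{2k-1}\qnum{M-2k}.
\]
Here we have used $\qnum{k-1-2a}\le\qnum{k-1}$. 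The first term is strictly positive because $k\ge2$ and $M\ge2$. The second term is nonnegative because $M-1+2a+2\ge 2k+1>k$. The third is nonnegative because $M-2k\ge0$. Hence $B^M_{k,a}>0$.

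The main obstacle is making this rearrangement of $B^M_{k,a}$ airtight. The point is to choose the splitting of $\qnum{M-1}$ so that each subtracted term pairs with a term of lower $q$-exponent, and the hypothesis $M\ge2k$ is exactly what allows that splitting. Once every factor is nonzero, $D^{\{y\}}_{M,k}\ne0$ follows. Theorem~\ref{thm:finite-fischer-super} then gives the degree-$(k+1)$ decomposition. Since the determinant is a nonzero element of $\R(q)$, invertible in the coefficient field, no localization is needed beyond what is already built into $\cR_M$.
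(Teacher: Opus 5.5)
Your proposal is correct. It follows a genuinely different, more explicit route than the paper's proof.

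\textbf{The paper's route.} It applies $\sigma_M$ to the determinant of Proposition~\ref{prop:two-vector-determinant}. It then defers the sign analysis of every factor to the positivity argument in \cite{BarseghyanBorySchneiderZhang2026}, noting only that $M\ge2k$ controls all factors, including $B^M_{k,a}$.

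\textbf{Your route.} You check the factors directly. All factors other than $T^M_k$ and $B^M_{k,a}$ are visibly positive for $M\ge1$. For $T^M_k$, the splitting $\qnum{M-1}=\qnum k+q^k\qnum{M-1-k}$ gives positivity. For $B^M_{k,a}$, the splitting $\qnum{M-1}=\qnum k+q^k\qnum{k-1}+q^{2k-1}\qnum{M-2k}$, together with $\qnum{k-1-2a}\le\qnum{k-1}$, bounds $B^M_{k,a}$ below by $\qnum k(1-q^{M-1})>0$ plus two nonnegative terms. This bound is an equality when $a=0$.

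\textbf{What each approach buys.} Yours is self-contained and shows exactly where $M\ge2k$ enters: through $\qnum{M-2k}\ge0$, through $q^{M+1+2a}\le q^k$, and through $M-1-k\ge0$ for $T^M_k$. It also shows why strictness $q<1$ matters. At $M=2k$ the classical limit of $B^M_{k,0}$ is $M-2k=0$, yet your lower bound stays strictly positive on all of $(0,1)$. The paper's route is shorter but rests on the external computation.

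\textbf{Two small fixes.}
\begin{enumerate}[label=(\roman*)]
\item Drop the sentence that appeals to $\qnum{k-1-2a}\le k-1-2a$. It is not the right justification for the inequality that follows, and your splitting supersedes it. Drop the garbled opening identity for the same reason.
\item State the conclusion for a numerical $q_0\in(0,1)$ in $\cR_{M,q_0}$. Over the field $\R(q)$ the determinant is nonzero for every positive $M$, whether or not $M\ge2k$, so in that reading the corollary says almost nothing. Its real content is that your pointwise positivity makes the evaluated determinant a nonzero real constant. That constant is a unit of $\mathbb T_Y(q_0)=\R[c_{11}]$, so the proof of Theorem~\ref{thm:finite-fischer-super} goes through after evaluation with no localization.
\end{enumerate}
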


\begin{proof}
After $Q=q^M$, the positivity argument for the factors in \eqref{eq:two-vector-determinant} is exactly the one proved in \cite{BarseghyanBorySchneiderZhang2026}.  In particular, $M\ge2k$ makes every possible factor, including $B^M_{k,a}$, nonzero for $0<q<1$.
\end{proof}

\section{Concluding remarks}

The coefficient specialization $Q\mapsto q^M$ gives a rigorous universal radial superspace calculus for every integral superdimension, including negative values.  The distinction between this formal specialization and a coordinate quotient is essential.  On finite blocks with at most $m$ abstract vector variables, the standard $\R^{m|2n}$ representation is faithful and all specialized operators and decompositions have an actual coordinate realization.

The exterior construction yields a Green decomposition by complementary projector images, while full left multiplication gives the determinant-localized right-monogenic Fischer theorem.  The resonance analysis shows that the superspace specialization contains information not visible from formal base change alone.  In one vector, nonpositive even superdimension produces an exact one-dimensional image defect together with one additional singular monomial.  At degree zero, the Fischer operator is exactly diagonal on the exterior-blade basis, so its kernel over the polynomial coefficient ring is the direct sum of the resonant support ranks.  Even-support roots give pure multiplicity $\binom Np$ on the corresponding support truncation.  At odd-support roots, all lower odd factors are nonzero and any lower interaction is reduced to one explicitly characterized even rank.  These isolated deformation resonances coexist with the persistent nonpositive-even-superdimension defect familiar from the exceptional classical superspace regime.

Appendix~\ref{sec:hermitian} identifies a boundary of the construction: independent orthogonal right $q$-vector derivatives cannot be converted into quotient-level Hermitian operators by constant scalar projection.  This does not affect the closed orthogonal theory developed in the main text.

\appendix
\section{Scalar Hermitian projection does not descend}\label{sec:hermitian}

We now work over the complexified coefficient field $\C(q)$.  Assume that the bosonic dimension is even and write the superspace as $\R^{2m|2n}$, with superdimension
\[
        M_H=2m-2n.
\]
Let $u$ be a second labelled vector intended to represent $Jx$.  In the Hermitian radial quotient, the one-generator complex-structure relations include
\begin{equation}\label{eq:Hermitian-relations}
        h:=u^2-x^2=0,
        \qquad
        s:=\{x,u\}=0.
\end{equation}
They are the special cases of
\[
 \{x,y\}=\{Jx,Jy\},
 \qquad
 \{Jx,y\}=-\{x,Jy\}
\]
from the Hermitian radial algebra \cite{DeSchepperGuzmanSommen2017}.  Let $\cI_J=(h,s)$ be the homogeneous two-sided ideal generated by \eqref{eq:Hermitian-relations}.  Define
\begin{equation}\label{eq:Hermitian-variables}
        z=\frac12(x+iu),
        \qquad
        z^\dagger=-\frac12(x-iu).
\end{equation}
Modulo $\cI_J$, one has $z^2=(z^\dagger)^2=0$ and $\{z,z^\dagger\}=-x^2$.

Let $D_{x,q}^{\mathrm R}$ and $D_{u,q}^{\mathrm R}$ be the two independent specialized right $q$-vector derivatives on the unconstrained two-labelled radial algebra, both with dimension parameter $M_H$.  The naive projected operators are
\begin{equation}\label{eq:naive-Hermitian-operators}
 \widetilde\partial_{z,q}^{\dagger,\mathrm R}
 :=\frac14(D_{x,q}^{\mathrm R}+iD_{u,q}^{\mathrm R}),
 \qquad
 \widetilde\partial_{z,q}^{\mathrm R}
 :=-\frac14(D_{x,q}^{\mathrm R}-iD_{u,q}^{\mathrm R}).
\end{equation}

\begin{proposition}[Linear ambient block]\label{prop:linear-Hermitian}
The operators \eqref{eq:naive-Hermitian-operators} satisfy
\[
 \widetilde\partial_{z,q}^{\mathrm R}z=-\frac14\qnum{M_H},
 \qquad
 \widetilde\partial_{z,q}^{\mathrm R}z^\dagger=0,
\]
\[
 \widetilde\partial_{z,q}^{\dagger,\mathrm R}z=0,
 \qquad
 \widetilde\partial_{z,q}^{\dagger,\mathrm R}z^\dagger=-\frac14\qnum{M_H}.
\]
\end{proposition}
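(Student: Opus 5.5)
The plan is to reduce everything to the action of the two independent derivatives $D_{x,q}^{\mathrm R}$ and $D_{u,q}^{\mathrm R}$ on the two linear generators $x$ and $u$ of the unconstrained two-labelled radial algebra. The needed values are
\[
 D_{x,q}^{\mathrm R}x=\qnum{M_H},\qquad D_{x,q}^{\mathrm R}u=0,\qquad D_{u,q}^{\mathrm R}u=\qnum{M_H},\qquad D_{u,q}^{\mathrm R}x=0 .
\]
By $\mathbb C(q)$-linearity of the operators in \eqref{eq:naive-Hermitian-operators}, the four displayed identities then follow by expanding $z$ and $z^\dagger$ from \eqref{eq:Hermitian-variables}.

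For the first value, apply Corollary~\ref{cor:one-vector-super} with $a=0$ and superdimension $M_H$. This gives $D_{x,q}^{\mathrm R}(x)=\qnum{M_H}x^0=\qnum{M_H}$, and by symmetry of labels the same holds for $D_{u,q}^{\mathrm R}(u)$.

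For the cross terms, view $u$ as the auxiliary vector $Y=\{u\}$ relative to $x$. In the relative decomposition \eqref{eq:relative-decomposition}, $u=y_{\{1\}}\cdot 1$ lies in the $y$-sector with scalar coefficient $A=1$. By \eqref{eq:super-finite-y}, $D_{x,q}^{\mathrm R}(u)=u\,\partial^{\mathrm{sc},M_H,\{u\}}_{x,q}(1)$. This vanishes, because every Jackson difference in \eqref{eq:super-scalar-operator} annihilates constants. The direct-limit compatibility of Theorem~\ref{thm:direct-limit} guarantees that computing in the block $\{x,u\}$ is legitimate. Symmetrically, $D_{u,q}^{\mathrm R}(x)=0$.

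It remains to expand. First,
\[
 \widetilde\partial^{\mathrm R}_{z,q}z=-\tfrac18(D_x-iD_u)(x+iu)=-\tfrac18\bigl(\qnum{M_H}+\qnum{M_H}\bigr)=-\tfrac14\qnum{M_H},
\]
since $-i\cdot i=1$. Next,
\[
 \widetilde\partial^{\mathrm R}_{z,q}z^\dagger=\tfrac18(D_x-iD_u)(x-iu)=\tfrac18\bigl(\qnum{M_H}-\qnum{M_H}\bigr)=0,
\]
since $(-i)(-i)=-1$. The two identities for $\widetilde\partial^{\dagger,\mathrm R}_{z,q}$ follow in the same way.

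The only step that needs care is the cross-term vanishing, because the derivatives are defined relative to a distinguished vector. There I would check explicitly that $u$ has no $x\wedge$ component and carries a constant scalar coefficient, so that neither the $\qnum{d-|I|}$ term in \eqref{eq:super-finite-x} nor any Jackson difference in \eqref{eq:super-scalar-operator} contributes.
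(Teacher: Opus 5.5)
Your proposal is correct and takes the same route as the paper. The paper likewise reads off $D_{x,q}^{\mathrm R}x=D_{u,q}^{\mathrm R}u=\qnum{M_H}$ from the one-vector formulas and $D_{x,q}^{\mathrm R}u=D_{u,q}^{\mathrm R}x=0$ from the auxiliary-vector formula, then substitutes these into the definitions of $z$, $z^\dagger$ and the projected operators; your explicit sign bookkeeping in that expansion is accurate.
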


\begin{proof}
The one-vector formulas give $D_{x,q}^{\mathrm R}x=D_{u,q}^{\mathrm R}u=\qnum{M_H}$, while the auxiliary-vector formula gives $D_{x,q}^{\mathrm R}u=D_{u,q}^{\mathrm R}x=0$.  Substitute into \eqref{eq:Hermitian-variables} and \eqref{eq:naive-Hermitian-operators}.
\end{proof}

\begin{proposition}[Descent obstruction]\label{prop:Hermitian-obstruction}
The operators \eqref{eq:naive-Hermitian-operators} do not preserve $\cI_J$.  More precisely,
\begin{align}
 \widetilde\partial_{z,q}^{\mathrm R}(h)&=\frac{\qnum2}{2}z,
 &
 \widetilde\partial_{z,q}^{\dagger,\mathrm R}(h)&=\frac{\qnum2}{2}z^\dagger,\label{eq:obstruction-h}\\
 \widetilde\partial_{z,q}^{\mathrm R}(s)&=iz,
 &
 \widetilde\partial_{z,q}^{\dagger,\mathrm R}(s)&=-iz^\dagger.\label{eq:obstruction-s}
\end{align}
Hence they do not induce endomorphisms of the Hermitian quotient.
\end{proposition}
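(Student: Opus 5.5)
The plan is to compute the four values directly from the one-vector formulas \eqref{eq:one-vector-super} and the specialized finite formulas of Proposition~\ref{prop:specialized-finite}. Non-descent then follows from a degree argument. Throughout, $D^{\mathrm R}_{x,q}$ is taken with $x$ distinguished and $Y=\{u\}$, and $D^{\mathrm R}_{u,q}$ with the roles of $x$ and $u$ interchanged; both have $M=M_H$.

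\emph{Step 1: derivatives of $h=u^2-x^2$.} Relative to $x$, the quantity $u^2=\tfrac12c_{11}$ is a central contraction variable. No Jackson difference in \eqref{eq:super-scalar-operator} acts on it, so $D^{\mathrm R}_{x,q}(u^2)=0$. Corollary~\ref{cor:one-vector-super} with $a=1$ gives $D^{\mathrm R}_{x,q}(x^2)=\qnum2x$. Hence $D^{\mathrm R}_{x,q}h=-\qnum2x$. Symmetrically, $D^{\mathrm R}_{u,q}h=\qnum2u$. Combining,
\[
(D^{\mathrm R}_{x,q}\pm iD^{\mathrm R}_{u,q})h=-\qnum2(x\mp iu).
\]
Now substitute $x+iu=2z$ and $x-iu=-2z^\dagger$ from \eqref{eq:Hermitian-variables}, together with the prefactors $\mp\tfrac14$ of \eqref{eq:naive-Hermitian-operators}. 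This yields \eqref{eq:obstruction-h}: $-\tfrac14\cdot(-2\qnum2 z)=\tfrac{\qnum2}{2}z$, and $\tfrac14\cdot 2\qnum2 z^\dagger=\tfrac{\qnum2}{2}z^\dagger$.

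\emph{Step 2: derivatives of $s=\{x,u\}$.} Relative to $x$, $s$ is the mixed scalar $s_1$ of degree one. In \eqref{eq:super-scalar-operator} only the term $2T^{(r)}_{q^2}u\,\delta^{(s)}_q$ acts. Since $\delta^{(s)}_q s=1$ and $T^{(r)}_{q^2}$ fixes constants, we get $D^{\mathrm R}_{x,q}s=2u$; this is the rule $\partial^{\mathrm{sc}}(s_i/2)=y_i$ already used in Theorem~\ref{thm:exact-blade-spectrum}. Symmetrically, $D^{\mathrm R}_{u,q}s=2x$. Then
\[
(D^{\mathrm R}_{x,q}-iD^{\mathrm R}_{u,q})s=-2i(x+iu)=-4iz,
\qquad
(D^{\mathrm R}_{x,q}+iD^{\mathrm R}_{u,q})s=2i(x-iu)=-4iz^\dagger.
\]
Applying the prefactors gives \eqref{eq:obstruction-s}.

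\emph{Step 3: non-preservation of the ideal.} Both $h$ and $s$ lie in $\cI_J$, but their images are nonzero multiples of $z$ or $z^\dagger$; here $\qnum2=1+q\ne0$. It therefore suffices to show $z,z^\dagger\notin\cI_J$. Since $\cI_J$ is generated by elements homogeneous of degree $2$ in the vector variables, every element of $\cI_J$ has components only in degrees $\ge2$. Meanwhile $z$ and $z^\dagger$ are nonzero of degree $1$, because $x$ and $u$ are independent in the free radial algebra. Hence neither operator maps $\cI_J$ into itself, so neither induces an endomorphism of the quotient.

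The only delicate point is Step~1's claim that $u^2$ is passive for $D^{\mathrm R}_{x,q}$. This amounts to checking that, in the $x$-relative decomposition \eqref{eq:relative-decomposition}, $u^2$ lies in the coefficient ring $\cA_x(\{u\})$ through $c_{11}$ and is not an $r$- or $s$-variable. Once that is confirmed, everything else is substitution.
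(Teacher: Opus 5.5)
Your proof is correct and is essentially the paper's argument: you compute the same elementary values $D^{\mathrm R}_{x,q}(x^2)=\qnum{2}x$, $D^{\mathrm R}_{x,q}(u^2)=0$, $D^{\mathrm R}_{x,q}\{x,u\}=2u$ and their $x\leftrightarrow u$ counterparts, combine them linearly, and conclude because an ideal generated in degree two contains no nonzero degree-one element. The only blemish is cosmetic: in Step~1 the prefactor attached to $D^{\mathrm R}_{x,q}+iD^{\mathrm R}_{u,q}$ is $+\tfrac14$, not the upper sign of $\mp\tfrac14$, but your explicit arithmetic already uses the correct signs.
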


\begin{proof}
The finite formulas give
\[
 D_{x,q}^{\mathrm R}(x^2)=\qnum2x,
 \quad D_{x,q}^{\mathrm R}(u^2)=0,
 \quad D_{x,q}^{\mathrm R}\{x,u\}=2u,
\]
and, symmetrically,
\[
 D_{u,q}^{\mathrm R}(u^2)=\qnum2u,
 \quad D_{u,q}^{\mathrm R}(x^2)=0,
 \quad D_{u,q}^{\mathrm R}\{x,u\}=2x.
\]
Combining them according to \eqref{eq:naive-Hermitian-operators} proves \eqref{eq:obstruction-h}--\eqref{eq:obstruction-s}.  The ideal $\cI_J$ is generated in degree two and imposes no degree-one relation, so the displayed nonzero outputs do not belong to it.
\end{proof}

\begin{proposition}[No scalar re-projection]\label{prop:no-scalar-reprojection}
Let $a,b\in\C(q)$ and put
\[
        L_{a,b}=aD_{x,q}^{\mathrm R}+bD_{u,q}^{\mathrm R}.
\]
If $L_{a,b}$ preserves $\cI_J$, then $a=b=0$.  Thus changing only the central scalar projection coefficients cannot produce a nontrivial right $q$-Hermitian derivative on the quotient.
\end{proposition}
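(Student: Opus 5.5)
The plan is to compute $L_{a,b}(h)$ and $L_{a,b}(s)$ explicitly and show that ideal-preservation forces linear relations on $a,b$ whose only solution is zero.

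\textbf{Step 1: the values on the generators.} Using the formulas already recorded in the proof of Proposition~\ref{prop:Hermitian-obstruction}, we get
\[
 L_{a,b}(h)=a\bigl(-\qnum2x\bigr)+b\,\qnum2u,
 \qquad
 L_{a,b}(s)=2a\,u+2b\,x.
\]
Here $h=u^2-x^2$, so $D^{\mathrm R}_{x,q}h=-\qnum2x$ and $D^{\mathrm R}_{u,q}h=\qnum2u$. For $s$ we use $D^{\mathrm R}_{x,q}\{x,u\}=2u$ and $D^{\mathrm R}_{u,q}\{x,u\}=2x$.

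\textbf{Step 2: the degree-one part of $\cI_J$ is zero.} The ideal $\cI_J$ is a two-sided ideal generated by the homogeneous quadratic elements $h$ and $s$ in the unconstrained two-labelled radial algebra, graded by total vector degree. Every element of $\cI_J$ is a sum of terms $\alpha g\beta$ with $g\in\{h,s\}$, so each homogeneous component has degree at least $2$. Hence $\cI_J\cap(\C(q)x\oplus\C(q)u)=0$.

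\textbf{Step 3: conclude.} If $L_{a,b}$ preserves $\cI_J$, then $L_{a,b}(s)\in\cI_J$. This element is homogeneous of degree one, so it must vanish. Since $x$ and $u$ are linearly independent in the radial algebra (by its exterior expansion), $2a=2b=0$.

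The condition on $h$ is not even needed. It gives the same conclusion, because $\qnum2=1+q\neq0$ in $\C(q)$.

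The only point that needs care is Step 2: we need the relation ideal to contain no degree-one elements. I would argue this directly from the homogeneity and grading of the radial algebra $R(\{x,u\})$, whose defining relations \eqref{eq:radial-relation} are homogeneous. The remaining steps are routine substitution.
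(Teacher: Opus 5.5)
Your proposal is correct and follows the paper's proof: you compute $L_{a,b}(h)=\qnum2(-ax+bu)$ and $L_{a,b}(s)=2(bx+au)$, observe that the ideal $\cI_J$, generated by homogeneous quadratic elements, contains no nonzero degree-one element, and then use the linear independence of $x$ and $u$. The only differences are minor: the paper uses both generators together, whereas you correctly note that the condition on $s$ alone already forces $a=b=0$ (as does the condition on $h$ alone, since $\qnum2=1+q\ne0$), and you make explicit the grading argument that the paper leaves implicit.
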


\begin{proof}
One has
\[
 L_{a,b}(h)=\qnum2(-ax+bu),
 \qquad
 L_{a,b}(s)=2(bx+au).
\]
Their degree-one classes must vanish if the ideal is preserved.  Since $x$ and $u$ remain linearly independent modulo $\cI_J$, and $2$ and $\qnum2=1+q$ are invertible in $\C(q)$, this forces $a=b=0$.
\end{proof}

\section*{Funding}

Yifan Zhang is co-funded by GAČR grant 25-16847S.

\bibliographystyle{amsplain}
\bibliography{references}

@misc{BarseghyanBorySchneiderZhang2026,
  author = {Barseghyan (Schneiderov{\'a}), Diana and Bory-Reyes, Juan and Schneider, Baruch and Zhang, Yifan},
  title = {Intrinsic {$q$}-{Radial} Vector Derivatives and Localized {Fischer} Decompositions on {Radial} Algebras},
  year = {2026},
  eprint = {2605.00775},
  archivePrefix = {arXiv},
  note = {\href{https://doi.org/10.48550/arXiv.2605.00775}{doi:10.48550/arXiv.2605.00775}}
}

@book{BrackxDelangheSommen1982,
  author = {Brackx, F. and Delanghe, R. and Sommen, F.},
  title = {Clifford Analysis},
  series = {Research Notes in Mathematics},
  volume = {76},
  publisher = {Pitman},
  address = {Boston},
  year = {1982}
}

@article{CoulembierSommen2010,
  author = {Coulembier, K. and Sommen, F.},
  title = {{$q$}-deformed harmonic and {Clifford} analysis and the {$q$}-{Hermite} and {Laguerre} polynomials},
  journal = {Journal of Physics A: Mathematical and Theoretical},
  volume = {43},
  number = {11},
  pages = {115202},
  year = {2010},
  note = {\href{https://doi.org/10.1088/1751-8113/43/11/115202}{doi:10.1088/1751-8113/43/11/115202}}
}

@article{CoulembierSommen2011,
  author = {Coulembier, K. and Sommen, F.},
  title = {Operator identities in {$q$}-deformed {Clifford} analysis},
  journal = {Advances in Applied Clifford Algebras},
  volume = {21},
  number = {4},
  pages = {677--696},
  year = {2011},
  note = {\href{https://doi.org/10.1007/s00006-011-0281-9}{doi:10.1007/s00006-011-0281-9}}
}

@article{DeBieSommen2007AnnPhys,
  author = {De Bie, H. and Sommen, F.},
  title = {A {Clifford} analysis approach to superspace},
  journal = {Annals of Physics},
  volume = {322},
  number = {12},
  pages = {2978--2993},
  year = {2007},
  note = {\href{https://doi.org/10.1016/j.aop.2007.04.012}{doi:10.1016/j.aop.2007.04.012}}
}

@article{DeBieSommen2007CorrectRules,
  author = {De Bie, H. and Sommen, F.},
  title = {Correct rules for {Clifford} calculus on superspace},
  journal = {Advances in Applied Clifford Algebras},
  volume = {17},
  pages = {357--382},
  year = {2007},
  note = {\href{https://doi.org/10.1007/s00006-007-0042-y}{doi:10.1007/s00006-007-0042-y}}
}

@article{DeSchepperGuzmanSommen2017,
  author = {De Schepper, H. and Guzm{\'a}n Ad{\'a}n, A. and Sommen, F.},
  title = {The radial algebra as an abstract framework for orthogonal and {Hermitian} {Clifford} analysis},
  journal = {Complex Analysis and Operator Theory},
  volume = {11},
  number = {5},
  pages = {1139--1172},
  year = {2017},
  note = {\href{https://doi.org/10.1007/s11785-016-0621-9}{doi:10.1007/s11785-016-0621-9}}
}

@article{DeSchepperGuzmanSommen2018Hermitian,
  author = {De Schepper, H. and Guzm{\'a}n Ad{\'a}n, A. and Sommen, F.},
  title = {{Hermitian} {Clifford} analysis on superspace},
  journal = {Advances in Applied Clifford Algebras},
  volume = {28},
  number = {1},
  pages = {Paper No. 5, 32 pp.},
  year = {2018},
  note = {\href{https://doi.org/10.1007/s00006-018-0824-4}{doi:10.1007/s00006-018-0824-4}}
}

@article{DeSchepperGuzmanSommen2018SpinActions,
  author = {De Schepper, H. and Guzm{\'a}n Ad{\'a}n, A. and Sommen, F.},
  title = {Spin actions in {Euclidean} and {Hermitian} {Clifford} analysis in superspace},
  journal = {Journal of Mathematical Analysis and Applications},
  volume = {457},
  number = {1},
  pages = {23--50},
  year = {2018},
  note = {\href{https://doi.org/10.1016/j.jmaa.2017.08.009}{doi:10.1016/j.jmaa.2017.08.009}}
}

@book{DelangheSommenSoucek1992,
  author = {Delanghe, R. and Sommen, F. and Sou{\v{c}}ek, V.},
  title = {Clifford Algebra and Spinor-Valued Functions: A Function Theory for the {Dirac} Operator},
  series = {Mathematics and its Applications},
  volume = {53},
  publisher = {Kluwer Academic Publishers},
  address = {Dordrecht},
  year = {1992},
  note = {\href{https://doi.org/10.1007/978-94-011-2922-0}{doi:10.1007/978-94-011-2922-0}}
}

@book{GilbertMurray1991,
  author = {Gilbert, J. E. and Murray, M. A. M.},
  title = {Clifford Algebras and {Dirac} Operators in Harmonic Analysis},
  series = {Cambridge Studies in Advanced Mathematics},
  volume = {26},
  publisher = {Cambridge University Press},
  address = {Cambridge},
  year = {1991}
}

@article{LavickaSmid2015,
  author = {Lavi{\v{c}}ka, Roman and {\v{S}}m{\'i}d, Dalibor},
  title = {{Fischer} decomposition for polynomials on superspace},
  journal = {Journal of Mathematical Physics},
  volume = {56},
  number = {11},
  pages = {111704},
  year = {2015},
  note = {\href{https://doi.org/10.1063/1.4935362}{doi:10.1063/1.4935362}}
}

@article{SabadiniStruppaSommenVanLancker2002,
  author = {Sabadini, I. and Struppa, D. C. and Sommen, F. and Van Lancker, P.},
  title = {Complexes of {Dirac} operators in {Clifford} algebras},
  journal = {Mathematische Zeitschrift},
  volume = {239},
  pages = {293--320},
  year = {2002},
  note = {\href{https://doi.org/10.1007/s002090100297}{doi:10.1007/s002090100297}}
}

@article{Sommen1997,
  author = {Sommen, F.},
  title = {An algebra of abstract vector variables},
  journal = {Portugaliae Mathematica},
  volume = {54},
  number = {3},
  pages = {287--310},
  year = {1997}
}

@article{ZimmermannBernsteinSchneider2025,
  author = {Zimmermann, M. L. and Bernstein, S. and Schneider, B.},
  title = {General aspects of {Jackson} calculus in {Clifford} analysis},
  journal = {Advances in Applied Clifford Algebras},
  volume = {35},
  pages = {Article 14},
  year = {2025},
  note = {\href{https://doi.org/10.1007/s00006-025-01374-x}{doi:10.1007/s00006-025-01374-x}}
}

@incollection{BernsteinZimmermannSchneider2025QuantumDirac,
  author = {Bernstein, Swanhild and Zimmermann, Martha Lina and Schneider, Baruch},
  title = {The {$q$}-{Dirac} Operator on Quantum {Euclidean} Space},
  booktitle = {Schur Analysis and Applications to Hypercomplex Analysis, Neural Networks, and Linear Systems},
  editor = {Alpay, Daniel and Lewkowicz, Izchak and Vajiac, Adrian and Vajiac, Mihaela},
  series = {Operator Theory: Advances and Applications},
  volume = {308},
  pages = {99--118},
  publisher = {Birkh{\"a}user},
  address = {Cham},
  year = {2026},
  note = {\href{https://doi.org/10.1007/978-3-032-02315-5_4}{doi:10.1007/978-3-032-02315-5\_4}}
}

\end{document}